\newcommand{\xupdownarrow}[1]{%
  {\left\updownarrow\vbox to #1{}\right.\kern-\nulldelimiterspace}
}
\newcommand{\ci}{C^\infty}
\newcommand{\Spec}{\mathrm{Spec}}
\def\bA{\mathbb{A}}
\def\bC{\mathbb{C}}
\def\bQ{\mathbb{Q}}
\def\bZ{\mathbb{Z}}
\def\cO{\mathcal{O}}
\def\GL{\operatorname{GL}}
\def\Gal{\operatorname{Gal}}
\newcommand{\Step}{\mathrm{Step}}
\newcommand{\fadd}{\mathcal{A}}
\newcommand{\IC}{\bC}
\newcommand{\ZZ}{\bZ}
\newcommand{\IQ}{\bQ}
\newcommand{\adeles}{\bA}
\newcommand{\isomto}{\overset{\sim}{\rightarrow}}
\newcommand{\ord}{\mathrm{ord}}
\newcommand{\cmfield}{K}
\newcommand{\OK}{\cO_\cmfield}
\newcommand{\CO}{\mathcal{O}}
\newcommand{\Ig}{\mathrm{Ig}}
\newcommand{\Dist}{\mathrm{Dist}}
\newcommand{\Hom}{\mathrm{Hom}}
\newcommand{\Haar}{\mathrm{Haar}}
\newcommand{\Sym}{\mathrm{Sym}}
\newcommand{\fin}{\mathrm{fin}}
\newcommand{\Auniv}{\mathcal{A}_{\rm{univ}}}
\newcommand{\uo}{\underline{\omega}}
\newcommand{\IR}{\mathbb{R}}
\newtheorem{thm}{Theorem}
\numberwithin{thm}{section}
\newtheorem{cor}[thm]{Corollary}
\newtheorem{lem}[thm]{Lemma}
\newtheorem{prop}[thm]{Proposition}
\newtheorem{question}{Question}
\theoremstyle{definition}
\newtheorem{defi}[thm]{Definition}
\newtheorem{example}[thm]{Example}
\theoremstyle{remark}
\newtheorem{rmk}[thm]{Remark}
\numberwithin{equation}{section}
\title[Eisenstein measures]{An introduction to Eisenstein measures}
\author[E. E. Eischen]{E. E. Eischen}
\thanks{Partially supported by NSF Grants DMS-1559609 and DMS-1751281.}
\address{E. Eischen\\
Department of Mathematics\\
University of Oregon\\
Fenton Hall\\
Eugene, OR 97403-1222\\
USA}
\email{eeischen@uoregon.edu}
\begin{document}
\bibliographystyle{amsalpha}

\newpage
\setcounter{page}{1}
\maketitle
\vspace{-0.2in}
\begin{abstract}
This paper provides an introduction to Eisenstein measures, a powerful tool for constructing certain $p$-adic $L$-functions.  First seen in Serre's realization of $p$-adic Dedekind zeta functions associated to totally real fields, Eisenstein measures provide a way to extend the style of congruences Kummer observed for values of the Riemann zeta function (so-called {\em Kummer congruences}) to certain other $L$-functions.  In addition to tracing key developments, we discuss some challenges that arise in more general settings, concluding with some that remain open.
\end{abstract}

\section{Introduction}\label{sec:intro}
In the mid 1800s, Kummer proved that the values of the Riemann zeta function at negative odd numbers satisfy striking congruences modulo powers of any prime number $p$.  More precisely, he proved that if $k$ and $k'$ are positive even integers not divisible by $p-1$, then for all positive integers $d$,
\begin{align*}
\left(1-p^{k-1}\right)\zeta(1-k)\equiv \left(1-p^{k'-1}\right)\zeta\left(1-k'\right) \mod p^d,
\end{align*}
whenever $k\equiv k'\mod \varphi\left(p^d\right)$, with $\varphi$ denoting Euler's totient function \cite{kummer}.  (By Euler's work a century earlier, for all positive integers $k$, $\zeta(1-k)$ is the rational number $(-1)^{k+1}\frac{B_k}{k}$ for all positive integers $k$, where $B_k$ denotes the $k$-th Bernoulli number, defined as the coefficients in the Taylor series expansion $
\frac{te^t}{e^t-1} = \sum_{n=0}^\infty B_n\frac{t^n}{n!}$.)  Kummer's motivation for studying these congruences stemmed from his interest in determining when a prime is (what we now call) {\it regular}, i.e. does not divide the class number of the cyclotomic field $\IQ\left(\zeta_p\right)$ with $\zeta_p\in \IC^\times$ a primitive $p$-th root of unity, in which case Kummer could prove Fermat's Last Theorem for exponents divisible by $p$.  As part of his investigations, Kummer had shown that the regularity is equivalent to a condition on the values of the Riemann zeta function.  More precisely, Kummer showed that a prime $p$ is regular if and only if if does not divide the numerator of the Bernoulli numbers $B_2, B_4, \ldots, B_{p-3}$ \cite{kummer36, kummer34}.
 
After Kummer's exciting discoveries, this topic then lay nearly dormant for a century.  Even though Hensel (who had been one of Kummer's graduate students) introduced the $p$-adic numbers soon after Kummer's death \cite{hensel}, the first formulation of a $p$-adic zeta function (a $p$-adic analytic function that interpolates the values of a modified zeta function at certain points, essentially encoding Kummer's congruences) occurred only in the 1960s, as a result of work of Kubota and Leopoldt \cite{KL}.  These functions, too, play a key role in the structure of cyclotomic fields, on a deeper level than Kummer had originally developed (or presumably even envisioned, given mathematical developments during the century following Kummer's life).

In the 1960s, Iwasawa linked the behavior of Galois modules over towers of cyclotomic fields to $p$-adic zeta-functions, forming the foundations of {\it Iwasawa theory}, a $p$-adic theory for studying families of arithmetic data \cite{iw, iw2}.  For example, the subgroup $\Gamma:=\Gal\left(\IQ_\infty/\IQ\right)\cong\ZZ_p$ of $\Gal(\IQ(\mu_{p^\infty})/\IQ)$ acts on the $p$-part of the ideal class group of the $p^m$-th cyclotomic extension for each $m\geq1$.  The inverse limit (under a norm map) of these groups is a module over the Iwasawa algebra $\Lambda:=\Lambda_{\Gamma, \ZZ_p}=\ZZ_p[\![\Gamma]\!]\cong \ZZ_p[\![T]\!]$.   
The main conjecture of Iwasawa theory, proved in \cite{MW}, says a realization $\theta\in\Lambda$ of a $p$-adic $L$-function generates the characteristic ideal of this $\Lambda$-module.  Thus, the $p$-adic $L$-function controls substantial structural information about this collection of class groups.

Iwasawa's conjectures were further generalized.  In particular, R. Greenberg predicted the existence of more general $p$-adic $L$-functions ($p$-adic analytic functions that can be realized as elements of certain Iwasawa algebras and whose values encode analogues of Kummer's congruences for more general $L$-functions) and their meaning in the context of certain Galois modules.
In particular, the Greenberg--Iwasawa main conjectures \cite{green3, green2, green1} predict that for a wide class of ordinary Galois representations $\rho$, there is a $p$-adic $L$-function $\mathscr{L}_\rho$ interpolating values of an $L$-function associated to $\rho\otimes\chi$ as $\chi$ varies over certain Hecke characters and that $\mathscr{L}_\rho$ can be realized as the generator of the characteristic ideal of a certain $\Lambda$-module (a {\it Selmer} group), where $\Lambda:=\CO[\![\Gamma_K]\!]$, with $\Gamma_K$ the Galois group of a compositum of $\ZZ_p$-extensions of $K$ and $\CO$ an appropriate $p$-adic ring.  In other words, the main conjectures predict $p$-adic $L$-functions govern the structure of Selmer groups as Galois modules.

Given $L$-functions' starring role in the main conjectures of Iwasawa theory (and their conjectured existence, including not only in Greenberg's conjectures, but also in, for example, \cite{CoPR, coatesII}), it is natural to ask:
\begin{question}\label{guiding-question}
Given an $L$-function whose values at certain points are known to be algebraic, how might we construct a $p$-adic $L$-function encoding congruences between values of (a suitably modifed at $p$) version of that $L$-function?
\end{question}
The main goal of this paper is to introduce particular tools, {\it Eisenstein measures}, which have proved to be especially useful for constructing $p$-adic $L$-functions during the past half-century, at least under certain conditions.  Even putting aside the challenge of trying to prove main conjectures in Iwasawa theory, it is generally hard to answer Question \ref{guiding-question}.  One might first look to Kummer or to Kubota--Leopoldt (who actually considered $p$-adic Dirichlet $L$-functions) for answers, in the hope that earlier techniques could be generalized.  This would, however, require extending congruences coming from Bernoulli polynomials to other settings, and unfortunately, we do not generally have realizations of values of $L$-functions in terms of similarly convenient polynomials.  While there has been some successful work in that direction (see, e.g., work of Barsky \cite{barsky} and P. Cassou-Nogues \cite{cassou-nogues}, who employed formulas of Shintani that have recently been further explored in work of Charollois--Dasgupta \cite{Ch-Da}), one of the most powerful tools for constructing $p$-adic $L$-functions in increasing generality during the past half-century comes from the theory of {\it $p$-adic modular forms}.

In the early 1970s, Serre produced the first $p$-adic families of Eisenstein series (the first instances of {\it Eisenstein measures}, which arose as part of his development of the theory of $p$-adic modular forms) and used them (together with Iwasawa's construction of the $p$-adic zeta function as an element of an Iwasawa algebra) to 
construct $p$-adic Dedekind zeta functions associated to totally real number fields
\cite{serre}.  
Because modular forms are special cases of automorphic forms and because the behavior of $L$-functions is closely tied to the behavior of automorphic forms (at least, in certain settings), this approach seemed more amenable to generalization.
 Indeed, its promise was immediately realized, including by Coates--Sinnott \cite{coates-sinnott}, Deligne--Ribet \cite{DR}, and Katz \cite{kaCM}, who employed Eisenstein measures in constructions of $p$-adic $L$-functions associated to Hecke characters for quadratic real fields, real number fields, and CM fields (with Katz proving the CM case only for half of all primes, a restriction that stood for over four decades until work of Andreatta--Iovita in 2019 \cite{andreatta-iovita}), respectively.  

Given that Eisenstein series govern key properties of certain $L$-functions (not only algebraicity, but also functional equations and meromorphic continuation), it is perhaps not surprising that they play key roles in our context as well.  Thus, another important question becomes:
\begin{question}\label{guiding-question2}
How might we construct $p$-adic families of Eisenstein series or, more specifically, $p$-adic Eisenstein measures?
\end{question}
Constructing $p$-adic Eisenstein measures is generally hard.  Were it not for the prestigious journal in which all those first papers following Serre's introduction of Eisenstein measures were published or the accomplished mathematicians whose names are attached to these results, the reader could not be blamed for thinking these results sound incremental.  Instead, though, this should be viewed as evidence that seemingly small tweaks to the data to which  $L$-functions are attached can lead to significant technical challenges in constructing the corresponding $p$-adic $L$-functions.

For most readers of this paper, implicit in Question \ref{guiding-question2} is that we want Eisenstein series that can be directly related to values of $L$-functions.  It is worth noting, though, that interest in Question \ref{guiding-question2} extends beyond number theory.  At least in the cases of modular forms and automorphic forms on unitary groups of signature $(1,n)$, $p$-adic families of Eisenstein series also are of interest in homotopy theory \cite{hopkinsICM, hopkins94, beh}.

Returning to Question \eqref{guiding-question}, we note the favorable fact that (at least, as it appears to this author) all known constructions of $p$-adic $L$-functions seem to rely on building on the specific techniques employed in the proof of the algebraicity of the values of the $\IC$-valued $L$-function in question.  Thus, if you know a proof of algebraicity, then you are at least in possession of clues to the techniques needed to construct $p$-adic $L$-functions.  For example, Serre's development of the theory of $p$-adic modular forms and its use for constructing $p$-adic zeta functions built on the approaches of Klingen and Siegel (who were, in turn, building on ideas of Hecke, as recounted in \cite[Section 1.3]{BCG} and \cite{IO}) for studying algebraicity of values of zeta functions by exploiting properties of Fourier coefficients of modular forms \cite{klingen, siegel1, siegel2}.  Likewise, Katz's construction of $p$-adic $L$-functions associated to Hecke characters of CM fields employs Damerell's formula, which was first used by Shimura to prove algebraicity.  In a similar spirit, Hida's construction of $p$-adic Rankin--Selberg $L$-functions attached to modular forms builds on Shimura's proof of algebraicity via the Rankin--Selberg convolution \cite{shimura-RS, hi85}.  In a more recent instance, the construction of $p$-adic $L$-functions for unitary groups due to the author, Harris, Li, and Skinner \cite{HELS} employs the doubling method (a pull-back method of the sort used by Shimura to prove algebraicity, e.g. in \cite{shar}, and which specializes in the case of rank $1$ unitary groups to Damerell's formula).

\subsection{Organization of this paper}
Now that we have established some history and motivation for studying Eisenstein series, we spend the remainder introducing their mathematical features.  Section \ref{sec:meas} introduces distributions and measures from several viewpoints, each of which is useful for different aspects of working with $p$-adic $L$-functions.  Section \ref{sec:serre} then discusses the first example of an Eisenstein measure, produced by Serre as a tool for constructing $p$-adic Dedekind zeta functions associated to totally real fields.  This development inspired efforts to construct Eisenstein measures valued in the space of $p$-adic Hilbert modular forms, tools for constructing $p$-adic $L$-functions attached to certain Hecke characters, as discussed in Section \ref{sec:hilbert}.  We conclude with a discussion of generalizations to other $L$-functions (Section \ref{sec:general}), as well as some of the significant challenges encountered as one tries to construct useful Eisenstein measures.

\subsection{Acknowledgements}
I would like to thank the local organizers of Iwasawa 2019, Denis Benois and Pierre Parent, for inviting me to give the four lectures that eventually led to this paper, as well as for their patience as I wrote it.  I would also like to thank them, along with the scientific organizers of Iwasawa 2019 (Henri Darmon, Ming-Lun Hsieh, Masato Kurihara, Otmar Venjakob, and Sarah Zerbes), for organizing an exciting, educational workshop.  The many excellent discussions I had with participants after each of my lectures influenced my approach to explaining the material in this paper.  I would especially like to thank Chi-Yun Hsu and Sheng-Chi Shih for taking careful notes in my lectures and sharing them with me.  In addition, I would like to thank Pierre Charollois for alerting me to some interesting aspects of the history of the approach of using constant terms of modular forms to study zeta functions.  I am also grateful to the referee for providing helpful feedback.

\section{$p$-adic distributions and measures}\label{sec:meas}

Motivated by Iwasawa's and Greenberg's conjectures about the Galois theoretic role of $p$-adic $L$-functions, our goal is to find an element in an Iwasawa algebra whose values at certain characters encode congruences between certain (modified) $L$-functions.  Distributions and measures will provide a convenient tool for realizing $p$-adic $L$-functions inside Iwasawa algebras.

For a more detailed treatment of distributions and measures, we especially recommend \cite[\S 7]{MSD} and \cite[\S 7.1-7.2 and \S 12.1-12.2]{wa}.

\subsection{Conventions and preliminaries}\label{conventions-section}
Throughout this paper, we fix a prime number $p$.  For convenience, we assume $p$ is odd.
We denote by $\IC_p$ the completion of an algebraic closure of $\IQ_p$.  We call a ring $\CO$ a {\em $p$-adic ring} if it is complete and separated with respect to the $p$-adic topology, i.e. $\CO \cong \varprojlim \CO/p^n \CO$.  Given a $p$-adic ring $\CO$ and a profinite group $G = \varprojlim_{n}G/G_n$ with each $G_n$ a finite index subgroup of $G$, we define the Iwasawa algebra
\begin{align*}
\Lambda_G:=\CO[\![G]\!]:=\varprojlim_n\CO\left[G/G_n\right].
\end{align*}
Following the usual conventions in Iwasawa theory, we define
\begin{align*}
\Gamma &:= 1+p\ZZ_p\\
\Lambda&:=\ZZ_p[\![\Gamma]\!]\cong\ZZ_p[\![T]\!].
\end{align*}
We also denote by $\mu_{p-1}$ the
multiplicative subgroup of order $p-1$ in $\ZZ_p^\times$.  Given a number field $F$, we denote by $F\left(p^\infty\right)$ the maximal abelian unramified away from $p$ extension of $F$.

In addition, throughout this section, let $T$ be a locally compact totally disconnected topological space, and let $W$ be an abelian group.  (For example, $T$ could be the Galois group of a $\ZZ_p$-extension and $W$ could be the ring of integers in a finite extension of $\IQ_p$.)  We denote by $\Step(T)$ the group of $\ZZ$-valued functions on $T$ that are locally constant of compact support.  For any compact open subset $U\subseteq T$, we denote by $\chi_U\in \Step(T)$ the characteristic function of $U$.

\subsection{Distributions}
We begin by introducing distributions, which include measures as a special case.
\begin{defi}
A {\em distribution} on $T$ with values in $W$ is a homomorphism 
\begin{align*}
\mu: \Step (T)&\rightarrow W.
\end{align*}
We set the notation
\begin{align*}
\int_T\varphi(t)d\mu:=\mu(\varphi)
\end{align*}
for each $\varphi\in\Step(T)$.
\end{defi}
The space of $W$-valued distributions on $T$ is then
\begin{align*}
\Dist(T, W):=\Hom(\Step(T), W).
\end{align*}
Observe that we have a bijection between $\Dist(T, W)$ and the set $\fadd(T, W)$ of finitely additive $W$-valued functions on compact open subsets of $T$.  By abuse of notation, given $\mu\in\Dist(T, W)$, we also denote by $\mu\in\fadd(T, w)$ the corresponding element under this bijection.  More precisely, given a compact open subset $U\subset T$,
\begin{align*}
\mu(U) := \int_Ud\mu:=\mu\left(\chi_U\right)=\int_T\chi_Ud\mu.
\end{align*}

\subsubsection{Distributions on (pro)finite sets}\label{profinite-distribution-section}
Observe that if $T$ is finite, then since $T$ is totally disconnected, $\Dist(T, W)$ is identified with the abelian group of $W$-valued functions on $T$.  So if $X$ is the inverse limit of a collection of finite sets $X_i$, $i\in I$ a directed poset, such that whenever $i\geq j$, we have a surjection
\begin{align*}
\pi_{ij}: X_i\twoheadrightarrow X_j
\end{align*}
and whenever $i\geq j\geq k$, $\pi_{jk}\circ \pi_{ij} = \pi_{ik}$, then we can reformulate the notion of $W$-valued distribution on $X$ as a collection of $W$-valued maps
\begin{align*}
\mu_j: X_j\rightarrow W
\end{align*}
such that
\begin{align*}
\mu_j(x) = \sum_{\left\{y|\pi_{ij}(y) = x\right\}} \mu_i(y)
\end{align*}
for all $i\geq j$ and all $x\in X_j$.  So we have
\begin{align*}
\Dist(X, W) = \varprojlim_n\Dist\left(X_n, W\right).
\end{align*}

\subsection{Measures}
We now suppose that $W$ is a finite-dimensional Banach space over an extension $K$ of $\IQ_p$, as this case will be particularly interesting to us.  
\begin{defi}
A {\em $W$-valued measure} on $T$ is a bounded $W$-valued distribution on $T$.
\end{defi}
\begin{defi}
If a measure $\mu$ takes values in a subgroup $A\subseteq W$, then we call $\mu$ an {\it $A$-valued measure}.
\end{defi}

Given topological spaces $X$ and $Y$, we denote by $\mathcal{C}(X, Y)$ the space of continuous maps from $X$ to $Y$.
Observe that if $T$ is compact and $W$ is a finite-dimensional $K$-Banach space, then there is a bijection
\begin{align*}
\left\{W\mbox{-valued measures on } T\right\}\leftrightarrow \left\{\mbox{bounded homomorphisms of $K$-Banach spaces $\mathcal{C}(T, K)\rightarrow W$}\right\}.
\end{align*}

Likewise, if $\CO$ is a $p$-adically complete ring, then we have bijections
\begin{align*}
\left\{\CO\mbox{-valued measures on } Y\right\}\leftrightarrow\left\{\CO\mbox{-linear maps } \mathcal{C}(Y, \CO)\rightarrow \CO\right\}\leftrightarrow\left\{\ZZ_p\mbox{-linear maps } \mathcal{C}\left(Y, \ZZ_p\right)\rightarrow \CO\right\}.
\end{align*}
More generally, given an $\CO$-valued measure $\mu$ on $Y$ and a homomorphism $\varphi: \CO\rightarrow \CO'$, with $\CO'$ also a $p$-adic ring, we get an $\CO'$-valued measure $\mu'$ on $Y$ defined by
\begin{align*}
\int_Y fd\mu' := \varphi\left(\int_Y f d\mu\right)
\end{align*}
for all $f\in \mathcal{C}(Y, \ZZ_p)$.

\subsubsection{Measures on profinite groups}\label{sec:pfgm}
Our main case of interest is the case where $T$ is a profinite group.  We write $T = \varprojlim_j T/T_j$ with the subgroups $T_j$ the ones that are open for the topology on $T$. (So the groups $T_j$ are the finite index, normal subgroups of $T$.)  Then for any $p$-adic ring $\CO$, we have an isomorphism of $\CO$-modules
\begin{align}\label{dictionary-iso}
\psi: \Dist\left(T, \CO\right)&\isomto \Lambda_T=\CO[\![T]\!]\\
\mu&\mapsto \alpha_\mu:= \left(\sum_{T/T_j}\mu_j(g)g\right)_{j\geq 0},
\end{align}
where $\mu_j$ is as in Section \ref{profinite-distribution-section}.  

Note that each $f\in \mathcal{C}(T, \CO)$ can be extended $\CO$-linearly to a function on the group ring $\CO[T]$, via 
\begin{align}\label{polydef}
f(\sum_{g\in T}a_g g) = \sum_{g\in T}a_g f(g)
\end{align}
for each finite sum $\sum_{g\in T}a_g g\in \CO[T]$ with $a_g\in\CO$. (Since $\CO[T]$ is a group ring, $a_g=0$ for all but finitely many $g$.)  Also, note that $\CO[T]$ is a subring of $\CO[\![T]\!]$, via
\begin{align*}
\CO[T]&\hookrightarrow \CO[\![T]\!]\\
\sum_{g\in T}a_g g&\mapsto \left(\sum_{T/T_j}a_g (g\bmod T_j)\right)_{j\geq 0}
\end{align*}
Since $\CO[T]$ is dense in $\CO[\![T]\!]$, we extend the map in Equation \eqref{polydef} continuously to $\CO[\![T]\!]$.  (The rings $\CO[T/T_j]$ are endowed with the product topology coming from $\CO$, and so the same is true for $\CO[\![T]\!]$.)  Of particular interest is the case where $T$ is generated by a topological generator $\gamma$ (e.g. $\gamma=1+p$ in  $T=1+p\ZZ_p$) and $f: T\rightarrow \CO^\times$ is a group homomorphism, in which case the elements of $\CO[\![T]\!]$ can be identified with power series $\sum_j a_j \gamma^j$, and $f(\sum_j a_j \gamma^j) = \sum_j a_j f(\gamma)^j.$  Similarly, if $T = \Gamma_1\times \cdots\times \Gamma_d$ with $\Gamma_i= 1+p\ZZ_p$, $i=1, \ldots, d$, with generators $\gamma_i=1+p\ZZ_p$, respectively, then each element of $ \CO[\![T]\!]$ can be expressed as a power series in $\gamma_1, \ldots, \gamma_d$, and for $h=\sum_{n_1, \ldots,n_d\geq 0}a_{n_1, \ldots, n_d}\gamma_1^{n_1}\cdots\gamma_d^{n_d}\in\CO[\![T]\!]$, $f(h)=\sum_{n_1, \ldots,n_d\geq 0}a_{n_1, \ldots, n_d}f\left(\gamma_1\right)^{n_1}\cdots f\left(\gamma_d\right)^{n_d}\in\CO[\![T]\!]$.
 
The inverse map $\psi^{-1}$ is given by
\begin{align*}
\mu_\alpha \mapsfrom \alpha,
\end{align*}
where for each $f\in\mathcal{C}(T, \CO)$
\begin{align*}
\mu_\alpha(f):= f(\alpha).
\end{align*}
If $\CO$ is flat over $\ZZ_p$, then each element $\alpha\in \Lambda_T$ corresponding to a measure $\mu_\alpha$ is completely determined by $\int_T\chi d\mu_\alpha$, where $\chi$ varies over finite order characters with values in extensions of $\IQ_p$ (see Proposition \ref{prop:half}).

\subsubsection{First examples}
Let $K$ be a finite extension of $\IQ_p$, and let $T$ be an infinite profinite group.
\begin{example}
It is a simple exercise to show that the $K$-valued {\em Haar distributions} $\mu_{\Haar}$ (i.e. translation invariant distributions, so $\mu_{\Haar}(U+y) = \mu_{\Haar}(U)$ for all $y\in T$ and compact open subsets $U\subset T$) on $T$ are not measures if $T$ is a pro-$p$ group (but are measures if $T$ is a pro-$\ell$ group with $\ell\neq p$).
\end{example}

\begin{example}
Fix an element $g\in T$.  The {\em Dirac distribution} $\delta_g$ defined by 
\begin{align*}
\delta_g(U) := \begin{cases}
1 & \mbox{ if $g\in U$}\\
0 & \mbox{ else},
\end{cases}
\end{align*}
for all compact open subsets of $T$, is a measure on $T$.  Under the isomorphism $\psi$ in \eqref{dictionary-iso}, $\mu_g$ corresponds to the element $g\in \Lambda_T$.
\end{example}

\subsection{Bernoulli distributions and Dirichlet $L$-functions}
We now briefly introduce a measure that produces a $p$-adic Dirichlet $L$-function.  More details are available in \cite[\S 12.2]{wa}.

Given a Dirichlet character $\chi$ be a Dirichlet character, let $L(s, \chi)$ be the associated Dirichlet $L$-function.  Then for all positive integers $n$,
\begin{align*}
L(1-n, \chi) = -\frac{B_{n, \chi}}{n},
\end{align*}
where the numbers $B_{n, \chi}$ are the generalized Bernoulli numbers, i.e. the numbers defined by
\begin{align*}
\sum_{a=1}^f\frac{\chi(a)te^{at}}{e^{ft}-1} = \sum_{n=0}^\infty B_{n, \chi}\frac{t^n}{n!},
\end{align*}
with $f$ denoting the conductor of $\chi$.  We also define a modified Dirichlet $L$-function $L^{(p)}(1-n, \chi) = \left(1-\chi(p)p^{n-1}\right)L\left(1-n, \chi\right)$.
If $\chi$ is the trivial character, so $f = 1$, then $B_{n, \chi} = B_n$, where $B_n$ denotes the $n$-th Bernoulli number, and $L(\chi, 1-n) = \zeta(1-n)$ is the Riemann zeta function studied by Kummer in the mid-1800s, as discussed in Section \ref{sec:intro}.  

\begin{rmk}
As above, let $n$ be a positive integer.  Note that when $\chi$ is odd and $n$ is positive, $L(1-n, \chi)=0$.  Likewise, when $\chi$ is even and $n$ is odd, $L(1-n, \chi)=0$, unless $\chi$ is the trivial character and $n=1$ (in which case we obtain $\zeta(0) =-\frac{1}{2}$).  This can be seen from the functional equation for $L(s, \chi)$, as explained in, e.g., \cite[Chapter 4]{wa}.
\end{rmk}

Let
\begin{align*}
\omega: \ZZ_p^\times\rightarrow\mu_{p-1}\subseteq \ZZ_p^\times
\end{align*}
denote the Teichm\"uller character (so $\omega(a)\equiv a\mod p$ for each $a\in\ZZ_p^\times$).  Kummer's congruences are a special case of the following:
\begin{thm}[\cite{KL}]\label{thm:KL}
Let $\chi$ be a Dirichlet character.  Then there exists a $p$-adic meromorphic (analytic, if $\chi\neq 1$) function $L_p\left(1-n, \chi\right)$ on $\left\{x\in\IC_p\mid |s|_p<p^{\frac{p-2}{p-1}}\right\}$ such that
\begin{align*}
L_p\left(1-n, \chi\right) = \left(1-\chi\omega^{-n}(p)p^{n-1}\right)\frac{-B_{n, \chi\omega^{-n}}}{n} = L^{(p)}(1-n, \chi\omega^{-n})
\end{align*}
for all positive integers $n$.
\end{thm}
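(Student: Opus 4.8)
The plan is to construct an explicit $\ZZ_p$-valued measure on $\ZZ_p^\times$ whose moments against characters recover the modified Bernoulli numbers, and then to read off the stated $p$-adic interpolation function from the Iwasawa algebra element attached to that measure under the isomorphism $\psi$ of \eqref{dictionary-iso}. The starting point is the classical family of Bernoulli distributions $E_{k}$ on $\ZZ/N\ZZ$ (for $N$ a power of $p$ times the conductor of $\chi$), defined via the Bernoulli polynomials $B_k(x)$ by $E_{k}(a + N\ZZ) = N^{k-1} B_k\!\left(\tfrac{a}{N}\right)$; these satisfy the distribution relation that makes them compatible across varying $N$, hence define an element of $\Dist(\ZZ_p, \IQ_p)$. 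These distributions are not bounded, so the key first step is the regularization trick: for a fixed auxiliary unit $c \in \ZZ_p^\times$ with $c \not\equiv 1$, the twisted distribution $E_{k,c}(U) := E_k(U) - c^{k} E_k(c^{-1}U)$ is $\ZZ_p$-valued, and — crucially — the $k=1$ regularized distribution $E_{1,c}$ is already a \emph{measure} (bounded), essentially because $B_1(x) = x - \tfrac12$ and the regularization kills the constant term's denominator. One then checks that integrating $a \mapsto a^{k-1}$ against $E_{1,c}$ reproduces, up to the elementary factor $(1-c^k)$, the value $B_k/k$, and more generally that $\int_{\ZZ_p^\times} a^{k-1} \, dE_{1,c} = (1-c^{k})\,(-\tfrac{B_k}{k})$ after excising the disc around $0$; twisting the whole construction by a Dirichlet character $\chi$ upgrades $B_k$ to $B_{k,\chi}$.

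Next I would restrict the measure to $\ZZ_p^\times$ and use the decomposition $\ZZ_p^\times \cong \mu_{p-1} \times \Gamma$ (recall $\Gamma = 1+p\ZZ_p$) together with the Teichm\"uller character $\omega$: any continuous character of $\ZZ_p^\times$ factors as $a \mapsto \omega^{j}(a)\langle a \rangle^{s}$ where $\langle a \rangle = a\,\omega(a)^{-1} \in \Gamma$. Projecting the measure to the $\omega^{-n}$-isotypic component and pushing forward to $\Gamma$ gives, via $\psi$, an element of $\Lambda = \ZZ_p[\![\Gamma]\!] \cong \ZZ_p[\![T]\!]$; dividing out the fixed factor $(1 - c^{1-n}\langle c\rangle^{\,?})$ — more precisely, one checks $c$ can be chosen so this unit never vanishes on the relevant domain — yields the function $L_p(1-n,\chi)$. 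The substitution $n \mapsto 1-s$ and the identification of $\Gamma$-characters with points of the open disc of radius $p^{(p-2)/(p-1)}$ (this is exactly the radius of convergence of $a \mapsto \langle a\rangle^s = \exp(s\log\langle a\rangle)$, since $\log$ maps $\Gamma$ into $p\ZZ_p$) gives the asserted domain; the interpolation formula $L_p(1-n,\chi) = (1-\chi\omega^{-n}(p)p^{n-1})(-B_{n,\chi\omega^{-n}}/n)$ then follows by evaluating the measure against $a \mapsto \chi\omega^{-n}(a) a^{n-1}$ and comparing with the Euler-factor computation above — the factor $1 - \chi\omega^{-n}(p)p^{n-1}$ appears precisely because we integrated over $\ZZ_p^\times$ rather than all of $\ZZ_p$, removing the $p$-part.

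The main obstacle is the boundedness (measure) property: verifying that $E_{1,c}$ — equivalently, that the Iwasawa algebra element really lies in $\Lambda$ and not just in its fraction field — requires the congruence $E_1(a+p^m\ZZ_p) - c\, E_1(\ldots) \in \ZZ_p$ uniformly in $m$, which is where the Kummer-type congruences are genuinely encoded and is the crux of the whole theorem. The meromorphy-versus-analyticity dichotomy ($\chi = 1$ forces a simple pole at $s=1$, coming from the pole of $\zeta$) is a secondary point: it arises because for the trivial character the normalizing unit $(1-c^{1-n}\langle c\rangle^{n-1})$ that one must divide by does vanish at the point corresponding to $n=1$, introducing a single pole, whereas for $\chi \neq 1$ the character twist prevents this. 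The remaining steps — the distribution relations, the explicit moment computations, and the radius-of-convergence bookkeeping — are routine once the regularization is in place, and I would cite \cite[\S 12.2]{wa} for the details rather than reproduce them.
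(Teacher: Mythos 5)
Your proposal is correct and follows essentially the same route as the paper: the paper states this as a cited result of Kubota--Leopoldt and then realizes it via exactly the regularized Bernoulli measure $E_c$ and Equation \eqref{equ:ec}, deferring the boundedness and moment computations to \cite[\S 12.2]{wa}, which is precisely the construction you describe (Bernoulli distributions, the $c$-regularization making $E_{1,c}$ bounded, restriction to $\ZZ_p^\times$, Teichm\"uller decomposition, and division by the auxiliary unit). The only slip is your displayed moment formula $\int_{\ZZ_p^\times}a^{k-1}\,dE_{1,c}=(1-c^{k})\left(-\tfrac{B_k}{k}\right)$, which should carry the additional Euler factor $(1-p^{k-1})$ once the disc around $0$ is excised --- but since you correctly attribute that factor to the restriction to $\ZZ_p^\times$ when stating the final interpolation formula, nothing essential is lost.
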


This is the first example of a {\it $p$-adic $L$-function}, i.e. a $p$-adic analytic function whose values at certain points agree with values of (suitably modified) $\IC$-valued $L$-functions.

\begin{rmk}
Fix an integer $n_0$ such that $0<n_0<p-1$.  Then for all $n\equiv n_0\mod p-1$, $\omega^{n} = \omega^{n_0}$ and 
\begin{align*}
L_p\left(1-n, \omega^{n_0}\right) = \zeta^{(p)}(1-n).
\end{align*}
Thus, we easily locate the values studied by Kummer among those $p$-adically interpolated by $L_p$.  Furthermore, by expressing $L_p(s, \chi)$ in terms of a $\ZZ_p$-valued measure on $\ZZ_p^\times$ (for example, by setting $d=1$ in Equation \eqref{equ:ec} below), we recover the congruences of Kummer (as also noted in \cite[Corollary 12.3]{wa}).
\end{rmk}

For each nonnegative integer $n$, let $B_n(X)$ denote the $n$-th Bernoulli polynomial, i.e. the polynomial defined by
\begin{align*}
\frac{te^{Xt}}{e^t-1} = \sum_{n=0}^\infty B_n(X) \frac{t^n}{n!}.
\end{align*}
So 
\begin{align*}
B_n(0) &= B_n \\
B_n(1) &= \begin{cases}
B_n & \mbox{ if $n\neq 1$}\\
B_1+1 & \mbox{ if $n = 1$}
\end{cases}
\end{align*}
If $\chi$ is a Dirichlet character, $f$ is the conductor of $\chi$, and $F$ is a positive integer divisible by $f$, then
\begin{align*}
B_{n, \chi} = F^{n-1}\sum_{a=1}^F\chi(a)B_n\left(\frac{a}{F}\right).
\end{align*}
We also have $B_k(X) = \sum_{i=0}^k \begin{pmatrix}k\\ i\end{pmatrix} B_i X^{k-i}$ and $B_k(1-X) = (-1)^k B_k(X)$ for all nonnegative integers $k$.

For each positive integer $i$, we define
\begin{align*}
Y_i:=\frac{1}{i}\ZZ/\ZZ,
\end{align*}
and for all positive integers $j\divides i$, we define
\begin{align*}
\pi_{ij}: Y_i\rightarrow Y_j\\
y\mapsto \frac{i}{j}\times y.
\end{align*}

\begin{defi}
The {\em $k$-th Bernoulli distribution} is the distribution $\phi = \left(\phi_i\right)_{i\geq 1}$ on $Y := \varprojlim_i Y_i$ defined for each positive integer $i$ by
\begin{align*}
\phi_i\left(\frac{a}{i}\right):=i^{k-1}B_k\left(\left\{\frac{a}{i}\right\}\right).
\end{align*}
\end{defi}

While the Bernoulli distribution is not a measure, we modify it to obtain a measure on $X := \varprojlim_n X_n$, where $X_n := \left(\ZZ/dp^{n+1}\right)^\times$ and $d$ is a fixed integer, as follows.  Fix $c\in \ZZ$ such that $\gcd(c, dp) = 1$.  For $x_n\in X_n$, we define
\begin{align*}
E_c\left(x_n\right):=E_{c, 1}\left(x_n\right) = B_1\left(\left\{\frac{x_n}{dp^{n+1}}\right\}\right) - B_1\left(\left\{\frac{c^{-1}x_n}{dp^{n+1}}\right\}\right) + \frac{c-1}{2},
\end{align*}
where $\left\{\right\}$ denotes the fractional part of a number.  Then, as further discussed in the proof of \cite[Theorem 12.2]{wa}, $E_c$ is a $\ZZ_p$-valued measure, and furthermore, letting $\langle\cdot \rangle$ denote the projection onto $1+p\ZZ_p$, we have
\begin{align}\label{equ:ec}
\int_{\left(\ZZ/dp\ZZ\right)^\times\times\left(1+p\ZZ_p\right)}\chi\omega^{-1}\langle\rangle^s dE_c = -\left(1-\chi(c)\langle c\rangle^{s+1}\right)L_p\left(s, \chi\right),
\end{align}
for all Dirichlet characters $\chi$ of conductor $dp^m$ with $m$ a nonnegative integer and $s\in\ZZ_p$.

\subsection{Some convenient spaces for defining measures}\label{sec:conven}
When constructing more general measures, in particular Eisenstein measures, it will be convenient to establish some particular subsets of characters on which it is sufficient to define a measure in order for that measure to be uniquely determined.  More precisely, we have the following.

For each of the following two lemmas, let $\CO$ be a $p$-adic ring.
\begin{lem}\label{meas-finite}
An $\CO$-valued measure on $\Gamma = 1+p\ZZ_p$ is completely determined by its values on characters of finite order.
\end{lem}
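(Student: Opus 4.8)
The plan is to use the dictionary isomorphism $\psi: \Dist(\Gamma, \CO) \isomto \Lambda_\Gamma = \CO[\![\Gamma]\!]$ from Equation \eqref{dictionary-iso}, together with the fact that $\Gamma = 1+p\ZZ_p$ is topologically generated by $\gamma = 1+p$, so that $\Lambda_\Gamma \cong \CO[\![T]\!]$ via $\gamma \mapsto 1+T$. Under this identification, a measure $\mu$ corresponds to a power series $\alpha_\mu = \sum_{n\geq 0} a_n T^n \in \CO[\![T]\!]$, and by the formula $\mu_\alpha(f) = f(\alpha)$ the value of $\mu$ on a continuous character $\chi: \Gamma \to \CO^\times$ (or into an extension, after extending scalars as in the last paragraph of Section \ref{sec:pfgm}) is obtained by substituting $\gamma \mapsto \chi(\gamma)$, i.e. $T \mapsto \chi(\gamma) - 1$. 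So the claim reduces to: a power series $\sum a_n T^n$ with coefficients in a $p$-adic ring is determined by its values at the points $T = \zeta - 1$ as $\zeta$ ranges over $p$-power roots of unity (these are exactly the values $\chi(\gamma)$ for $\chi$ of finite order, since a finite-order character of $1+p\ZZ_p \cong \ZZ_p$ sends $\gamma$ to a $p$-power root of unity).

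First I would make the reduction above precise, being careful about scalars: finite-order characters take values in $\mu_{p^\infty} \subset \overline{\IQ}_p^\times$, so one passes to a sufficiently large finite extension $\CO' \supseteq \CO$ and the induced measure $\mu'$ as described just before Section \ref{sec:pfgm}; since $\CO \to \CO'$ can be taken injective (or at least, the statement is about the measure $\mu$ itself, and one may reduce to $\CO$ flat, or simply work coefficientwise), it suffices to show the power series $\alpha_\mu$ is determined by the numbers $\alpha_\mu(\zeta - 1)$, $\zeta \in \mu_{p^\infty}$. Second, I would establish this density/separation statement for power series. The key point is that for each fixed $n$, the values $\zeta_{p^n} - 1$ (with $\zeta_{p^n}$ a primitive $p^n$-th root of unity) have positive valuation tending to $0$, and one can recover the coefficients $a_n \bmod p^N$ by evaluating at enough roots of unity: concretely, reducing modulo a power of $p$ makes $\CO/p^N\CO$ finite-ish in the relevant sense and the substitution map becomes a finite linear-algebra problem whose relevant Vandermonde-type determinant is a unit, or alternatively one invokes the Weierstrass preparation / Mahler expansion machinery. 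The cleanest route is probably: if $\alpha - \beta$ vanishes at all $\zeta - 1$ for $\zeta \in \mu_{p^\infty}$, then reducing mod $p^N$ and using that a nonzero polynomial over (the image of) $\CO/p^N$ has boundedly many roots while $\mu_{p^\infty}$ is infinite, one forces $\alpha \equiv \beta \bmod p^N$ for all $N$, hence $\alpha = \beta$ by $p$-adic completeness and separatedness.

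The main obstacle I expect is the subtlety in the second step when $\CO$ is not an integral domain (or not flat over $\ZZ_p$): "a nonzero polynomial has finitely many roots" needs care, and the substitution $T \mapsto \zeta - 1$ lands in a ring extension, so one must track how roots of unity interact with zero-divisors and ramification. I would handle this by first treating $\CO = \ZZ_p$ (or $\CO$ a $p$-adic DVR, e.g. $\cO_K$), where Weierstrass preparation applies directly and the result is classical (this is essentially \cite[Corollary 12.3]{wa} and the discussion around Proposition \ref{prop:half}), and then bootstrapping to general $p$-adic $\CO$ either by reduction mod $p^N$ and a coefficientwise Mahler-series argument, or by simply citing Proposition \ref{prop:half} under a flatness hypothesis and noting that the general statement follows since a measure is determined by its underlying additive function, which is recovered from characters after base change to $\ZZ_p[\![\Gamma]\!]$-coefficients. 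In the expository spirit of the paper, a short argument via Weierstrass preparation over $\cO_K$ plus the remark that $\ZZ_p[\![T]\!]$-flat base change reduces the general case to this one should suffice.
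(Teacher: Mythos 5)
Your proposal follows essentially the same route as the paper's proof: identify the measure with a power series $f_\mu\in\CO[\![T]\!]$ via $\gamma\leftrightarrow 1+T$, observe that a finite-order character evaluates $f_\mu$ at $\chi(\gamma)-1=\zeta-1$ with $\zeta$ a $p$-power root of unity (infinitely many points of the open unit disc), and conclude via Weierstrass preparation that a nonzero power series can have only finitely many such zeroes. Your additional caution about non-domain or non-flat $\CO$ is reasonable---the paper's own argument tacitly counts zeroes in $\IC_p$---but the underlying mechanism is identical.
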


\begin{lem}\label{meas-infinite}
An $\CO$-valued measure on $\Gamma = 1+p\ZZ_p$ is completely determined by its values on $\langle \rangle^k: \Gamma\rightarrow \Gamma\subset \ZZ_p$ for any infinite set of $k\in\ZZ_p$.
\end{lem}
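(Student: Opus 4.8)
The plan is to transport the statement into one about power series via the Iwasawa isomorphism $\psi$ of \eqref{dictionary-iso}, and then to deduce it from a $p$-adic identity theorem. Since $\Gamma=1+p\ZZ_p$ is procyclic with topological generator $\gamma=1+p$, the map $\psi$ identifies the $\CO$-valued measures on $\Gamma$ with $\CO[\![\Gamma]\!]\cong\CO[\![T]\!]$ via $\gamma\mapsto 1+T$; write $F_\mu\in\CO[\![T]\!]$ for the power series attached to $\mu$. On $\Gamma$ the character $\langle\,\rangle^k$ is just $x\mapsto x^k$, with $\langle\gamma\rangle^k=(1+p)^k$, so under the continuous extension of characters to $\CO[\![\Gamma]\!]$ described after \eqref{polydef} one has
\[
\int_\Gamma \langle\,\rangle^k\,d\mu \;=\; F_\mu\!\left((1+p)^k-1\right).
\]
Hence, if two measures have equal integrals against $\langle\,\rangle^k$ for all $k$ in an infinite set $S$, the difference $F$ of their power series vanishes at every point $t_k:=(1+p)^k-1$, $k\in S$, and it suffices to show that such an $F\in\CO[\![T]\!]$ must be $0$.

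Next I would observe that the test points accumulate. All $t_k$ lie in $p\ZZ_p$; the map $k\mapsto(1+p)^k$ is a homeomorphism $\ZZ_p\isomto 1+p\ZZ_p$, in particular injective, so the $t_k$ are pairwise distinct, and since $S$ is infinite and $\ZZ_p$ is compact, $\{t_k:k\in S\}$ accumulates at some $t_\ast\in p\ZZ_p$. I would then translate: since $t_\ast\in p\CO$ and $\CO$ is $p$-adically complete, $G(T):=F(T+t_\ast)$ is a well-defined element of $\CO[\![T]\!]$ that vanishes at the infinitely many pairwise distinct points $s_k:=t_k-t_\ast$ accumulating at $0$. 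The conclusion is that $G\in\bigcap_{n\ge 0}T^n\CO[\![T]\!]=\{0\}$, proved by induction on $n$: if $G=T^nH$ with $H\in\CO[\![T]\!]$, then $s_k^{n}H(s_k)=0$, and — when $\CO$ is an integral domain — $s_k\neq 0$ forces $H(s_k)=0$, so $H$ again vanishes along $s_k\to 0$, whence $H(0)=0$ by continuity of $H$ on $p\CO$, i.e. $T\mid H$. Thus $G=0$, so $F=0$. For a coefficient ring with zero-divisors one first reduces to the domain case (e.g.\ modulo the minimal primes of $\CO$), as in \cite[\S 7.1--7.2]{wa}.

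Steps such as the translation to power series and the accumulation of the $t_k$ are routine; the crux is the identity-theorem input just used — a form of Strassmann's theorem / $p$-adic Weierstrass preparation, namely that a power series convergent on the disk $p\CO$ with infinitely many zeros accumulating at an interior point vanishes identically. I expect this, together with the bookkeeping needed to handle a general $p$-adic coefficient ring $\CO$ rather than a domain, to be the main obstacle. (The companion Lemma \ref{meas-finite} follows by the very same argument, taking instead the test points $\zeta-1$ for $\zeta$ ranging over $p$-power roots of unity, which likewise accumulate at $0$ after passing to a ramified extension of $\CO$.)
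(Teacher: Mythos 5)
Your argument is correct in substance and shares its skeleton with the paper's proof: both identify $\mu$ with $f_\mu\in\CO[\![T]\!]$ via $\gamma=1+p\leftrightarrow 1+T$ and reduce the lemma to the statement that a power series vanishing at the infinitely many distinct points $(1+p)^k-1\in p\ZZ_p$ must be zero. You part ways at that final step. The paper invokes the Weierstrass Preparation Theorem, writing a nonzero $f=\pi^rP(T)U(T)$ and concluding that $f$ has only finitely many zeros $\beta\in\IC_p$ with $|\beta|_p<1$; you instead give a self-contained identity-theorem argument, extracting an accumulation point of the test points, translating, and showing the translated series lies in $\bigcap_n T^n\CO[\![T]\!]=0$. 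Your route is more elementary and isolates exactly what is needed of $\CO$: that each nonzero $s_k\in p\ZZ_p$ remain a non-zero-divisor in $\CO$. That is also your one soft spot — the fallback of reducing ``modulo minimal primes'' does not work for an arbitrary $p$-adic ring, since a residue domain may have characteristic $p$, in which case every $s_k$ maps to $0$ and the argument (indeed the lemma itself, e.g. for $\CO=\ZZ/p\ZZ$, where $\int_\Gamma\langle\rangle^k d\mu=f_\mu(0)$ for all $k$) fails. The clean hypothesis is flatness of $\CO$ over $\ZZ_p$, which the paper imposes in the neighboring statements (Theorem \ref{KC-thm} and the remark before Proposition \ref{prop:half}) and which makes each $s_k$ a non-zero-divisor, so $s_k^nH(s_k)=0$ yields $H(s_k)=0$ directly and your domain detour is unnecessary. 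The paper's proof carries the same implicit restriction (its form of Weierstrass preparation and its evaluation at points of $\IC_p$ presuppose a nice local flat $\CO$), so this is a shared caveat rather than a defect of your approach; the paper's version is shorter and handles Lemmas \ref{meas-finite} and \ref{meas-infinite} uniformly, while yours trades the preparation theorem for accumulation-point bookkeeping.
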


\begin{proof}[Proof of Lemmas \ref{meas-finite} and \ref{meas-infinite}]
Note that $\CO[\![\Gamma]\!]$ is isomorphic to the power series ring $\CO[\![T]\!]$ (which, as explained in \cite[Section 7.1]{wa}, follows from the isomorphisms $\CO[\Gamma_n]\cong\CO[T]/\left((1+T)^{p^n}-1\right)$, $\gamma\leftrightarrow 1+T$ where $\Gamma_n = \Gamma/\Gamma^{p^n}$). The proofs of both lemmas then follow from the Weierstrass Preparation Theorem, which tells us that if $0\neq f(T)\in \CO[\![T]\!]$, then $f(T) = \pi^rP(T) U(T)$, with $\pi$ a non-unit in $\CO$, $r$ a nonnegative integer, $P(T)$ a monic polynomial whose non-leading coefficients are all divisible by $\pi$, and $U(T)\in\CO[\![T]\!]^\times$ \cite[Chapter VII Section 4]{bourbaki1998commutative}.  Consequently, each nonzero element of $\CO[\![T]\!]$ has only finitely many zeroes $\beta\in \IC_p$ with $|\beta|_p<1$ (since a polynomial can have only finitely many zeroes, and an element of $\CO[\![T]\!]^\times$ cannot have any zeroes with absolute value $<1$).

Now, let $\mu$ be an $\CO$-valued measure on $\Gamma$, and let $f_\mu\in\CO[\![T]\!]$ be the power series corresponding to $\mu$ as in Section \ref{sec:pfgm}.  Then $\mu(\chi) = f_\mu\left(\chi(\gamma)-1\right)$.  So if $\mu$ vanishes at infinitely many finite order characters or infinitely many $\langle \rangle^k$, then $\mu$ is identically $0$.
\end{proof}

More generally, we have the abstract {\em Kummer congruences}, a generalization of the style of congruences established by Kummer.

\begin{thm}[Abstract Kummer congruences]\label{KC-thm}
Let $Y$ be a compact, totally disconnected space, let $\CO$ be a $p$-adic ring that is flat over $\ZZ_p$, and let $I$ be some indexing set.  Let $\left\{f_i\right\}_{i\in I}\subseteq \mathcal{C}(Y, \CO)$ be such that the $\CO\left[\frac{1}{p}\right]$-span of the functions $f_i$ is uniformly dense in $\mathcal{C}\left(Y, \CO\left[\frac{1}{p}\right]\right)$.  Let $\left\{a_i\right\}_{i\in I}\subseteq \CO$.  Then there exists an $\CO$-valued $p$-adic measure $\mu$ on $Y$ such that
\begin{align*}
\int_Yf_i = a_i
\end{align*}
for all $i\in I$ if and only if the elements $a_i$ satisfy the {\em abstract Kummer congruences}, i.e.:

{\em Given $\left\{b_i\right\}_{i\in I}\subset \CO\left[\frac{1}{p}\right]$ such that $b_i=0$ for all but finitely many $i\in I$, together with a nonnegative integer $n$ such that
$\sum_{i\in I} b_if_i(y)\in p^n\CO$ for all $y\in Y$, we have $\sum_{i\in I}b_ia_i\in p^n\CO$.
}
\end{thm}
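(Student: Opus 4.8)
The plan is to treat the two implications separately. Necessity is a short formal computation; sufficiency is the substantive direction, and the congruences are used there precisely to show that the obvious candidate functional, defined on the $\CO[1/p]$-span of the $f_i$, is well defined and does not shrink $p$-adic size, so that it extends by continuity to all of $\mathcal{C}(Y,\CO)$. For necessity, suppose the measure $\mu$ exists. Given a finitely supported family $\{b_i\}\subset\CO[1/p]$ with $g:=\sum_i b_i f_i$ valued in $p^n\CO$, first choose $m\geq 0$ with $p^m b_i\in\CO$ for all $i$, so that $p^m g\in\mathcal{C}(Y,\CO)$ is valued in $p^{m+n}\CO$. Since $\CO$ is flat over $\ZZ_p$, multiplication by $p^{m+n}$ is injective on $\CO$, so $p^m g=p^{m+n}h$ for a unique $h\in\mathcal{C}(Y,\CO)$; then $\sum_i(p^m b_i)a_i=\int_Y p^m g\,d\mu=p^{m+n}\int_Y h\,d\mu\in p^{m+n}\CO$, and cancelling the injective $p^m$ yields $\sum_i b_i a_i\in p^n\CO$.

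For sufficiency I would work inside $\mathcal{C}(Y,\CO)$. Let $V_0\subseteq\mathcal{C}(Y,\CO)$ be the set of $\CO$-valued functions of the form $\sum_i b_i f_i$ with $\{b_i\}\subset\CO[1/p]$ finitely supported, and define $\mu_0\colon V_0\to\CO$ by $\mu_0(\sum_i b_i f_i):=\sum_i b_i a_i$. Then I would verify, in order: (i) \emph{$\mu_0$ is well defined and $\CO$-linear} --- if $\sum_i b_i f_i$ is the zero function it lies in $p^n\CO$ for every $n$, so the abstract Kummer congruences force $\sum_i b_i a_i\in\bigcap_n p^n\CO$, which is $0$ since $\CO$ is separated, and $\CO$-linearity on $V_0$ is then immediate; (ii) \emph{$\mu_0$ contracts the $p$-adic filtration} --- the abstract Kummer congruences say exactly that $v\in V_0$ valued in $p^n\CO$ forces $\mu_0(v)\in p^n\CO$, so $\mu_0$ is uniformly continuous for the sup-norm; (iii) \emph{$V_0$ is dense in $\mathcal{C}(Y,\CO)$} --- given $h\in\mathcal{C}(Y,\CO)$ and $N\geq 0$, the density hypothesis furnishes $v$ in the $\CO[1/p]$-span of the $f_i$ with $h-v$ valued in $p^N\CO$, whence $v=h-(h-v)$ is $\CO$-valued, so $v\in V_0$ and $v$ lies within $p^N\CO$ of $h$; (iv) \emph{$\mathcal{C}(Y,\CO)$ is $p$-adically complete} --- since $Y$ is compact and totally disconnected every continuous map $Y\to\CO/p^n\CO$ is locally constant, hence lifts, giving $\mathcal{C}(Y,\CO)=\varprojlim_n\mathcal{C}(Y,\CO/p^n\CO)$. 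Combining (i)--(iv), $\mu_0$ extends uniquely to a continuous $\CO$-linear map $\mu\colon\mathcal{C}(Y,\CO)\to\CO$, continuity propagating $\CO$-linearity to the extension. By the correspondence between $\CO$-valued measures on $Y$ and $\CO$-linear maps $\mathcal{C}(Y,\CO)\to\CO$ recalled earlier, $\mu$ is an $\CO$-valued measure, and $\int_Y f_i\,d\mu=\mu_0(f_i)=a_i$ by taking $b_i=1$ and the rest $0$.

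The main obstacle --- indeed the only place the hypothesis does any work --- is points (i) and (ii): recognizing that well-definedness of $\mu_0$ is the ``$n\to\infty$ with finitely many nonzero $b_i$'' instance of the congruences, and that the general-$n$ instance is literally the statement that $\mu_0$ does not decrease $p$-adic size. Everything else --- clearing denominators, the density reduction inside $\mathcal{C}(Y,\CO)$, and $p$-adic completeness of $\mathcal{C}(Y,\CO)$ --- is routine $p$-adic bookkeeping, with flatness of $\CO$ over $\ZZ_p$ entering only to legitimize dividing by powers of $p$.
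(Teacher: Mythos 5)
Your argument is correct, and it is essentially the standard proof: the paper itself does not reprove this statement but simply cites Katz's Proposition (4.0.6), where the argument is exactly the one you give (necessity by clearing denominators and $\ZZ_p$-flatness; sufficiency by defining the functional on the $\CO$-valued part of the span, using the congruences for well-definedness and $p$-adic boundedness, and extending by density and the completeness of $\mathcal{C}(Y,\CO)$). No gaps; your points (i)--(iv) supply exactly the bookkeeping the citation leaves implicit.
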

\begin{proof}
This is \cite[Proposition (4.0.6)]{kaCM}, which is proved in {\em loc. cit.}
\end{proof}

When working with a profinite abelian group, the following consequence is particularly convenient for constructing Eisenstein measures in general.

\begin{prop}[First half of Proposition (4.1.2) of \cite{kaCM}]\label{prop:half}
Let $G$ be a profinite abelian group.  Let $\CO$ be a $p$-adically complete ring that is flat over $\ZZ_p$, and suppose that $R$ contains a primitive $n$-th root of unity for all $n$ such that $G$ contains a subgroup of index $n$.  Let $\mu$ be an $\CO$-valued $p$-adic measure on $G$, and let $\chi_0$ be a continuous homomorphism from $G$ to $\CO^\times$.  Then $\mu$ is completely determined by the values $\int_G\chi_0\chi d\mu$ as $\chi$ ranges over finite order characters of $G$.
\end{prop}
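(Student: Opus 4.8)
The plan is to reduce the assertion to the statement that the $\CO[\tfrac1p]$-span of the characters appearing in the hypothesis is uniformly dense in the space of $\CO[\tfrac1p]$-valued continuous functions on $G$, and then to conclude because a measure is a continuous functional, hence is determined by its restriction to a dense subspace. (This density is also precisely what is needed to bring Theorem \ref{KC-thm} to bear, with $Y=G$.)

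First I would dispose of the twist by $\chi_0$. The hypothesis on $\CO$ guarantees that every finite-order character of $G$ is $\CO^\times$-valued (a finite-order character has kernel of index equal to the order of its image), so all the integrals $\int_G\chi_0\chi\,d\mu$ make sense; moreover, since $\chi_0$ itself takes values in $\CO^\times$, multiplication by $\chi_0$ is an automorphism of the ring $\mathcal{C}(G,\CO)$, hence a sup-norm isometry of $\mathcal{C}\bigl(G,\CO[\tfrac1p]\bigr)$, and precomposition with it sends measures to measures and finite-order characters to twists $\chi_0\chi$. So it is enough to treat $\chi_0=1$: an $\CO$-valued measure on $G$ is determined by the values $\int_G\chi\,d\mu$ as $\chi$ runs over the finite-order characters of $G$. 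Using the identification of $\CO$-valued measures on the compact space $G$ with bounded $\CO$-linear functionals $\mathcal{C}(G,\CO)\to\CO$ from Section \ref{sec:meas} --- which extend continuously to $\CO[\tfrac1p]$-linear functionals $\mathcal{C}\bigl(G,\CO[\tfrac1p]\bigr)\to\CO[\tfrac1p]$ --- it suffices to show that the $\CO[\tfrac1p]$-span $S$ of the finite-order characters of $G$ is uniformly dense in $\mathcal{C}\bigl(G,\CO[\tfrac1p]\bigr)$: a measure killing $S$ then kills its closure, and applying this to the difference of two measures gives the claim.

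The density of $S$ I would establish in two steps. Because $G$ is profinite, the locally constant functions $G\to\CO[\tfrac1p]$ --- those factoring through some finite quotient $H:=G/G_j$ --- are uniformly dense in $\mathcal{C}\bigl(G,\CO[\tfrac1p]\bigr)$, by uniform continuity on the compact totally disconnected space $G$. It then remains to check that for each finite abelian quotient $H$ the characters of $H$ span the $\CO[\tfrac1p]$-module of all functions $H\to\CO[\tfrac1p]$; pulling these back along $G\twoheadrightarrow H$ and letting $j$ vary exhibits every locally constant function as an element of $S$. For the finite-group statement, observe that $|H|=[G:G_j]$ is a unit in $\CO[\tfrac1p]$ (the prime $p$ is inverted there, and every other prime divisor of $|H|$ is already a unit in $\ZZ_p\subseteq\CO$), and that $\CO[\tfrac1p]$ contains a primitive root of unity of order the exponent of $H$, since each invariant factor of $H$ is the index of a subgroup of $H$, hence of a subgroup of $G$, so the hypothesis on $\CO$ applies. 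With these two facts, the discrete Fourier transform $h\mapsto(\chi(h))_{\chi\in\widehat H}$ identifies $\CO[\tfrac1p][H]$ with $\prod_{\chi}\CO[\tfrac1p]$ as $\CO[\tfrac1p]$-algebras, and dually (or by the explicit inversion $f\mapsto\frac{1}{|H|}\sum_{h\in H}f(h)\chi(h)^{-1}$) the characters of $H$ form an $\CO[\tfrac1p]$-basis of the function module.

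The one place demanding genuine care is this finite-group Fourier argument over the ring $\CO[\tfrac1p]$, which need not be a field or a domain: one should verify that invertibility of $|H|$ together with the supply of roots of unity furnished by the hypothesis really does make the character table invertible over $\CO[\tfrac1p]$ --- it does, these being exactly the conditions under which $\CO[\tfrac1p][H]$ is a split étale $\CO[\tfrac1p]$-algebra --- and that the roots of unity called for in the statement are exactly those used here. The remaining ingredients (the $\chi_0$-twist, density of locally constant functions, and passing from density to uniqueness via continuity of the measure) are routine.
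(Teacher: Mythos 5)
Your proposal is correct, and it is essentially the argument of the source the paper cites: the paper itself offers no proof of Proposition \ref{prop:half} beyond the attribution to Katz, and Katz's proof of Proposition (4.1.2) is exactly your reduction — untwist by $\chi_0$ (an isometric automorphism of $\mathcal{C}(G,\CO)$ since $\chi_0$ is $\CO^\times$-valued), then show the $\CO[\tfrac1p]$-span of the finite-order characters is uniformly dense via density of locally constant functions plus finite Fourier inversion over $\CO[\tfrac1p]$, which is what Theorem \ref{KC-thm} requires. The one delicate point you flag is real but harmless under the intended reading: ``primitive $n$-th root of unity'' should be taken to mean a root of the $n$-th cyclotomic polynomial (equivalently, the image of $\zeta_n$ under a map $\ZZ[\zeta_n]\rightarrow\CO$), which together with invertibility of $|H|$ in $\CO[\tfrac1p]$ does make the character table of each finite quotient $H$ invertible.
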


\subsubsection{Dictionary between several approaches to defining $p$-adic measures}

We conclude this section with Figure \ref{fig:dictionary}, which summarizes the connections between several formulations of the definition of an $\CO$-valued $p$-adic measure given above, each of which is useful for different aspects of constructing $p$-adic $L$-functions.  In the figure, $\CO$ is a $p$-adic ring, and $G = \varprojlim_i G_i$ is a profinite $p$-adic group, with transition maps $\pi_{ij}: G_i\rightarrow G_j$ whenever $i\geq j$ and $\pi_{jk}\circ \pi_{ij} = \pi_{ik}$ for all $i\geq j\geq k$.

\begin{figure}
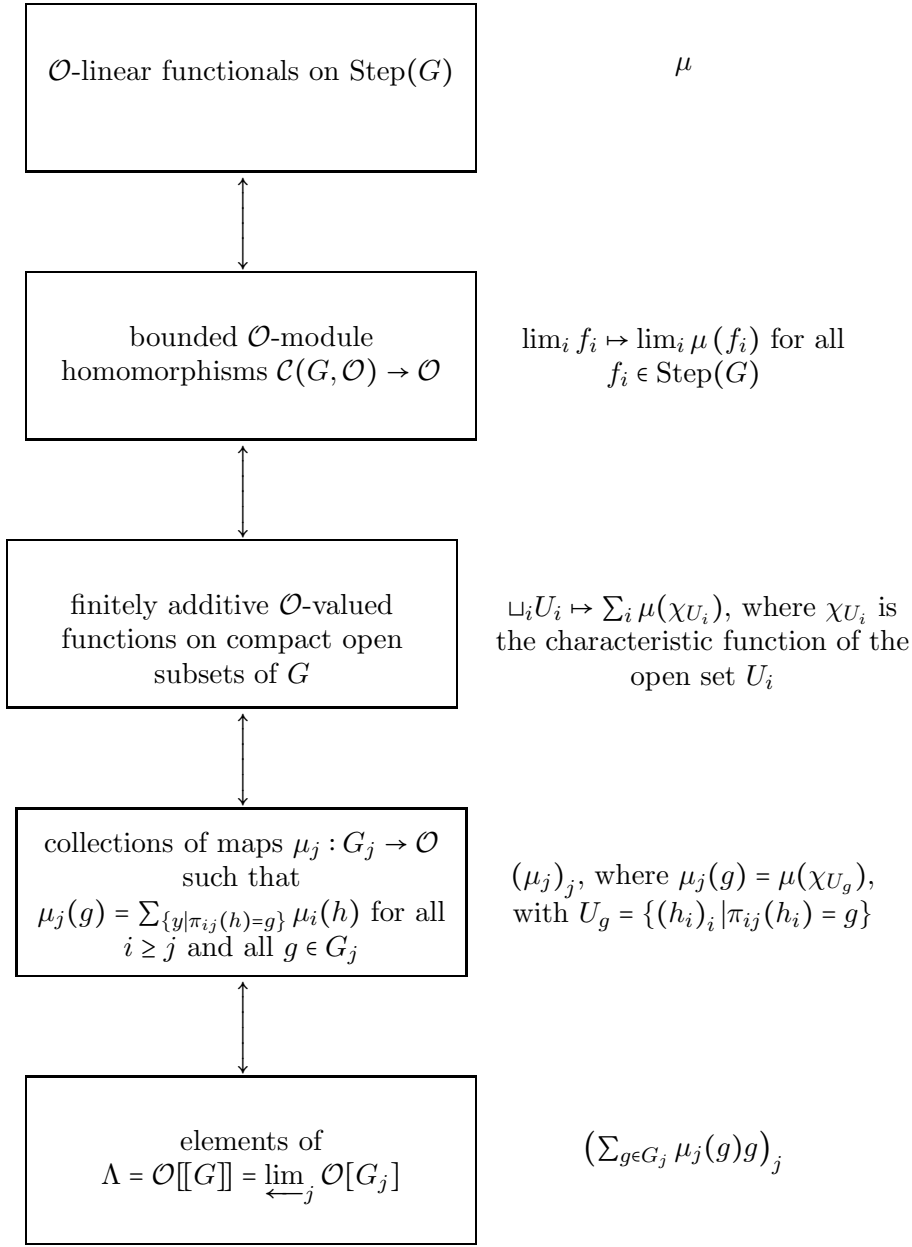

\begin{center}

\framebox{
\parbox[t][2.0cm]{5.50cm}{

\addvspace{0.6cm} \centering 

$\CO$-linear functionals on $\Step(G)$

} 
}\parbox[t][2.0cm]{5.50cm}{

\addvspace{0.6cm} \centering 

$\mu$
}

\hspace{-2.25in}$\xupdownarrow{0.75cm}$

\framebox{
\parbox[t][2.0cm]{5.50cm}{

\addvspace{0.6cm} \centering

bounded $\CO$-module homomorphisms $\mathcal{C}(G, \CO)\rightarrow \CO$

} 
}\parbox[t][2.0cm]{5.50cm}{

\addvspace{0.6cm} \centering 

$\lim_i f_i\mapsto \lim_i\mu\left(f_i\right)$ for all $f_i\in \Step(G)$
}

\hspace{-2.25in}$\xupdownarrow{0.75cm}$

\framebox{
\parbox[t][2.0cm]{5.50cm}{

\addvspace{0.6cm} \centering 

finitely additive $\CO$-valued functions on compact open subsets of $G$

} 
}\hspace{0.2in}\parbox[t][2.0cm]{5.50cm}{

\addvspace{0.6cm} \centering 

$\sqcup_i U_i\mapsto \sum_i\mu(\chi_{U_i}),$
where $\chi_{U_i}$ is the characteristic function of the open set $U_i$
}

\hspace{-2.25in}$\xupdownarrow{0.75cm}$

\framebox{
\parbox[t][2.0cm]{5.50cm}{

\addvspace{0.2cm} \centering 

collections of maps $\mu_j: G_j\rightarrow \CO$ such that $\mu_j(g)= \sum_{\left\{y|\pi_{ij}(h) = g\right\}} \mu_i(h)$
for all $i\geq j$ and all $g\in G_j$

} 
}\hspace{0.1in}\parbox[t][2.0cm]{5.50cm}{

\addvspace{0.6cm} \centering 

$\left(\mu_j\right)_j$, where $\mu_j (g) = \mu(\chi_{U_g}),$ with $U_g = \{\left(h_i\right)_i| \pi_{ij}(h_i) = g\}$ 
} 

\hspace{-2.25in}$\xupdownarrow{0.75cm}$

\framebox{
\parbox[t][2.0cm]{5.50cm}{

\addvspace{0.6cm} \centering 

elements of $\Lambda = \CO[\![G]\!] = \varprojlim_j\CO[G_j]$

} 
}\parbox[t][2.0cm]{5.50cm}{

\addvspace{0.6cm} \centering 

$\left(\sum_{g\in G_j}\mu_j(g)g\right)_j$
}

\end{center}
\caption{\label{fig:dictionary} Dictionary between several formulations of $p$-adic measures}
\end{figure}

\section{A first look at $p$-adic Eisenstein measures}\label{sec:serre}

We are primarily interested in measures as a vehicle for obtaining $p$-adic $L$-functions inside an Iwasawa algebra.  While Bernoulli numbers were useful constructing the measure in Equation \eqref{equ:ec}, they do not necessarily generalize to many other $L$-functions of interest.  It turns out that $p$-adic modular forms provide a convenient tool for constructing $p$-adic $L$-functions in much more generality, while also producing $p$-adic Dedekind zeta functions associated to totally real fields.  

\begin{rmk}
Because of their links with $L$-functions, we will be particularly interested in {\em Eisenstein measures}, measures whose values on certain sets of characters (like those in Section \ref{sec:conven}) are Eisenstein series.
\end{rmk}

In this section, we briefly introduce $p$-adic modular forms, following Serre's approach.  For more details, see \cite{serre}.  We denote by $v_p$ the valuation on $\IQ_p$ such that $v_p(p) = 1$.  For $f(q) = \sum_{n=0}^\infty a_nq^n\in\IQ_p[\![q]\!]$, we define
\begin{align*}
v_p(f) := \inf_n v_p\left(a_n\right).
\end{align*}
So $v_p(f)\geq m$ if and only if $f\equiv 0\mod p^m$ and $v_p(f)\geq 0$ if and only if $f\in\ZZ_p[\![q]\!]$.  Let $\left\{f_i\right\}\subseteq\IQ_p[\![q]\!]$.  We write $f_i\rightarrow f$ and say ``The sequence $f_1, f_2, \ldots$ converges to $f$'' if $v_p\left(f_i-f\right)\rightarrow \infty$, i.e. the coefficients of $f_i$ converge uniformly to those of $f$ as $i\rightarrow \infty$.  We also write $f\equiv g\mod p^m$ if $v_p(f-g)\geq m.$

\begin{example}
Let $k\geq 4$ be an even integer.  Consider the level $1$, weight $k$ Eisenstein series $G_k$ whose Fourier expansion is given by
\begin{align*}
G_k (z) &= \frac{\zeta(1-k)}{2} + \sum_{n\geq 1}\sigma_{k-1}(n) q^n
\end{align*}
where $q = e^{2\pi i z}$ and $\sigma_{k-1}(n) = \sum_{d\divides n}d^{k-1}.$
In the 1800s, Kummer proved that if $p-1\ndivides k$, then $\frac{\zeta(1-k)}{2} $ is $p$-integral, as well as that if $k\equiv k'\mod p-1$, then $\frac{\zeta(1-k)}{2} \equiv \frac{\zeta(1-k')}{2} \mod p$ \cite{kummer}.  So if we also apply Fermat's little theorem to the non-constant coefficients, we see that
\begin{align*}
G_k\equiv G_{k'}\mod p
\end{align*}
whenever $k\equiv k'\not\equiv 0 \mod p-1$.
\end{example}

\subsection{Congruences $\mod p^m$}
Recall that, for convenience, we assume $p$ is odd.  The reader who is curious about $p=2$ can find the analogous statements for that case in \cite{serre}.
\begin{thm}[TH\'EOR\`EME 1 of \cite{serre}]\label{weights-thm}
Let $m\in\ZZ_{\geq 1}$, and let $f, g\in\IQ[\![q]\!]$ be modular forms of weights $k, k',$ respectively, with $v_p\left(f-g\right)\geq v_p(f)+m$.  If $f\neq 0$, then $k\equiv k'\mod (p-1)p^{m-1}$.
\end{thm}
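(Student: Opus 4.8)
The plan is to translate the problem into the geometry of modular forms over $\ZZ_p$. A modular form of weight $k$ is a rule $(E,\omega)\mapsto f(E,\omega)$ assigning to an elliptic curve $E$ with nowhere-vanishing invariant differential $\omega$ an element of the base ring, homogeneous of degree $-k$ in $\omega$, so that $f(E,\lambda\omega)=\lambda^{-k}f(E,\omega)$. Over a $p$-adic base this is the same datum as a function on the Igusa tower (elliptic curves equipped with a trivialization of the formal group), and the weight is recorded precisely by the action of $\ZZ_p^\times$ on the trivialization. The key observation is that a congruence $f\equiv g\bmod p^m$ between two such functions, one of weight $k$ and one of weight $k'$, forces $\lambda^{-k}\equiv\lambda^{-k'}\bmod p^m$ for all $\lambda\in\ZZ_p^\times$ at any point where $f$ is a unit, and the divisibility $(p-1)p^{m-1}\mid k-k'$ is then a purely arithmetic consequence.

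Carrying this out: first, after rescaling $f$ and $g$ by $p^{v_p(f)}$, which changes neither the weights nor the hypothesis $v_p(f-g)\geq v_p(f)+m$, I may assume $v_p(f)=0$, so $f,g\in\ZZ_p[\![q]\!]$, $f\equiv g\bmod p^m$, and $\bar f\neq 0$ in $\F_p[\![q]\!]$; in particular $g$ is a nonzero modular form of weight $k'$, and we may assume $k\neq k'$. Second, I would invoke the $q$-expansion principle, together with irreducibility of the Igusa tower, to promote $f\equiv g\bmod p^m$ from an identity of $q$-expansions to an identity of functions on the Igusa tower, and to produce a test object $x$ at which $f(x)\in\ZZ_p^\times$ (which exists because $\bar f\neq 0$). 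Third, for every $\lambda\in\ZZ_p^\times$,
\[
\lambda^{-k}f(x)=([\lambda]^*f)(x)\equiv([\lambda]^*g)(x)=\lambda^{-k'}g(x)\equiv\lambda^{-k'}f(x)\pmod{p^m},
\]
and cancelling the unit $f(x)$ gives $\lambda^{k-k'}\equiv 1\pmod{p^m}$ for all $\lambda\in\ZZ_p^\times$. Fourth, I evaluate this on generators of $\ZZ_p^\times\cong\mu_{p-1}\times(1+p\ZZ_p)$: a $\lambda$ of exact order $p-1$ gives $(p-1)\mid k-k'$, since distinct $(p-1)$-st roots of unity are already incongruent modulo $p$; and $\lambda=1+p$, for which $v_p((1+p)^n-1)=1+v_p(n)$, gives $p^{m-1}\mid k-k'$; since $\gcd(p-1,p^{m-1})=1$, these combine to $(p-1)p^{m-1}\mid k-k'$, as claimed.

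The step I expect to be the main obstacle is the second one: the $q$-expansion principle in this mod-$p^m$ form---injectivity of the passage from a $p$-adic modular form to its $q$-expansion, and the fact that the Tate curve meets every component---is exactly the content of the irreducibility of the Igusa tower (Igusa, Deligne--Rapoport, Katz), and not a formality. A reader preferring to stay within Serre's elementary framework would instead use the following substitute: by the von Staudt--Clausen theorem the normalized weight-$(p-1)$ Eisenstein series (constant term $1$) satisfies $E_{p-1}\equiv 1\bmod p$, hence $E_{p-1}^{p^{m-1}}\equiv 1\bmod p^m$, so multiplication by it shifts the weight by $(p-1)p^{m-1}$ while being invisible modulo $p^m$. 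The case $m=1$ (the congruence modulo $p-1$) then follows from the structure of modular forms mod $p$, via the operator $\theta=q\,d/dq$ and the associated filtration (Swinnerton-Dyer), and the refinement to modulo $(p-1)p^{m-1}$ requires pushing that analysis modulo $p^m$---which is where the real work of the elementary proof is concentrated.
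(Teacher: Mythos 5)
Your argument is correct, but note that the paper states this theorem without proof, simply citing Serre, and Serre's own argument is the ``elementary'' one you relegate to your closing paragraph: after reducing to $v_p(f)=0$, he inducts on $m$, with the base case $m=1$ supplied by Swinnerton-Dyer's structure theory of modular forms mod $p$ and the inductive step exploiting $E_{p-1}\equiv 1\bmod p$ (von Staudt--Clausen) together with the filtration and the operator $\theta=q\,d/dq$. What you propose as the primary proof is instead Katz's geometric argument: regard $f$ and $g$ as elements of the ring $V$ of generalized $p$-adic modular functions on the Igusa tower, use the mod-$p^m$ $q$-expansion principle for $V$ (resting on Igusa irreducibility, as you correctly flag --- and it must indeed be applied in $V$, where weights are forgotten, since $f-g$ is not a form of a single weight) to upgrade the congruence of $q$-expansions to a congruence of functions, and then read the weight off the $\ZZ_p^\times$-action on trivializations at a point where $f$ is a unit; your final descent from $\lambda^{k-k'}\equiv 1\bmod p^m$ for all $\lambda\in\ZZ_p^\times$ to $(p-1)p^{m-1}\mid k-k'$ is carried out correctly (one could add the standard remark that for level $1$ one first rigidifies with auxiliary prime-to-$p$ level structure). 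The trade-off is clear: Serre's route is self-contained and needs no moduli theory, while yours imports Igusa's theorem but explains \emph{why} the congruence holds --- the weight is a character of $\ZZ_p^\times$ --- and transports essentially verbatim to the Hilbert and PEL settings that occupy the rest of this survey. Your sketch of the elementary route is only a sketch (the passage from $p^{m-1}$ to $p^m$ is where the real work sits, as you acknowledge), but since your main argument is complete modulo standard cited inputs, the proposal stands.
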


Ultimately, we want not just congruences but $p$-adic measures, which leads us to Section \ref{sec:pmforms}.

\subsection{$p$-adic modular forms}\label{sec:pmforms}

Let $X_m= \ZZ/(p-1)p^{m-1}\ZZ$, and let $X = \varprojlim_m X_m = \ZZ_p\times \ZZ/(p-1)\ZZ$.  We identify $X$ with the space of $\ZZ_p^\times$-valued characters of $\ZZ_p^\times=\left(\ZZ/p\ZZ\right)^\times\times\left(1+p\ZZ_p\right)$, i.e.
\begin{align*}
X& =\ZZ_p\times \ZZ/(p-1)\ZZ\\
k&\leftrightarrow\left(s, u\right)
\end{align*}
corresponds to the $\ZZ_p^\times$-valued character of $\ZZ_p^\times$ defined by
\begin{align*}
a\mapsto a^k:=\langle a\rangle^s\omega^u(a).
\end{align*}

\begin{defi}[Serre] A {\em $p$-adic modular form} is a power series $f = \sum_{n\geq 0}a_n q^n\in\IQ_p[\![q]\!]$ such that there exists a sequence of modular forms $f_1, f_2, \ldots$ such that $f_i\rightarrow f$.
\end{defi}

As a consequence of Theorem \ref{weights-thm}, we see that a nonzero $p$-adic modular form $f = \lim_i f_i$ has weight $k=\lim_i k_i\in X$, where $k_i$ denotes the weight of $f_i$.  A $p$-adic limit of $p$-adic modular forms is again a $p$-adic modular form $f$, and if $f\neq 0$, the weights again converge as in Theorem \ref{weights-thm}.

\begin{cor}[COROLLAIRE 1 of \cite{serre}]\label{coro1}
Let $f = \sum_{n\geq 0}a_nq^n$ be a $p$-adic modular form of weight $k\in X$.  Suppose the image of $k$ in $X_{m+1}$ is nonzero.  Then $v_p\left(a_0\right)+m\geq \inf_{n\geq 1}v_p\left(a_n\right)$.
\end{cor}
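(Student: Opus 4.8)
The plan is to deduce the corollary from the weight-congruence theorem (Theorem~\ref{weights-thm}) by a proof by contradiction, using the constant $a_0$ itself, regarded as a constant power series, as a comparison form: $a_0$ is a $p$-adic modular form of weight $0\in X$, being the $p$-adic limit of rational constants, each a weight-$0$ level-$1$ modular form.

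First I would dispose of two degenerate cases. If $a_0=0$ then $v_p(a_0)=\infty$ and the asserted inequality is automatic. If instead $a_n=0$ for every $n\ge1$, then $f=a_0$ is a nonzero constant (we are not in the previous case), hence a nonzero $p$-adic modular form of weight $0$; since the weight in $X$ of a nonzero $p$-adic modular form is well defined, this forces $k=0$, contradicting the hypothesis that $k$ has nonzero image in $X_{m+1}$. So from now on $a_0\ne0$ and $v:=\inf_{n\ge1}v_p(a_n)$ is a finite integer.

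Now suppose, toward a contradiction, that $v_p(a_0)+m<v$; since all quantities here are integers, this means $v\ge v_p(a_0)+m+1$. Set $g:=a_0$, a $p$-adic modular form of weight $0$. Then $f-g=\sum_{n\ge1}a_nq^n$, so $v_p(f-g)=v$, while $v_p(f)=\min(v_p(a_0),v)=v_p(a_0)$ because $v>v_p(a_0)$. Hence $v_p(f-g)=v\ge v_p(f)+(m+1)$ and $f\ne0$. Applying Theorem~\ref{weights-thm} to the pair $f$ (weight $k$) and $g$ (weight $0$), with the integer $m+1\ge1$ in the role of the theorem's parameter, yields $k\equiv0\pmod{(p-1)p^{m}}$, i.e.\ the image of $k$ in $X_{m+1}=\ZZ/(p-1)p^{m}\ZZ$ vanishes. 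This contradicts the hypothesis, completing the proof.

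The one delicate point — and the step I expect to be the main obstacle — is that Theorem~\ref{weights-thm} as stated is about \emph{classical} modular forms, whereas $f$ and $g$ above are only $p$-adic modular forms. One fix is to use the $p$-adic form of the weight-congruence recorded in the discussion just after Theorem~\ref{weights-thm} (``the weights again converge as in Theorem~\ref{weights-thm}''), which is exactly what the argument needs. Alternatively, to rely on the classical statement alone, I would fix classical modular forms $f_i\to f$ of weights $k_i\to k$ and a rational constant $c_i$ with $v_p(c_i-a_0)\ge v$; for $i$ large a short uniform-convergence estimate gives $v_p(f_i)=v_p(a_0)$ and $v_p(f_i-c_i)\ge v\ge v_p(f_i)+(m+1)$, so Theorem~\ref{weights-thm} gives $k_i\equiv0\pmod{(p-1)p^{m}}$ for all large $i$, and letting $i\to\infty$ (the reduction $X\to X_{m+1}$ being continuous) produces the same contradiction. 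Everything else is routine bookkeeping with $v_p$ and with the identification $X_{m+1}=\ZZ/(p-1)p^{m}\ZZ$ coming from $X_m=\ZZ/(p-1)p^{m-1}\ZZ$.
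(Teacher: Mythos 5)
Your proof is correct and is essentially the paper's own argument: both compare $f$ with the weight-$0$ form $g=a_0$ and apply Theorem~\ref{weights-thm} with parameter $m+1$ to conclude that $v_p(f-g)<v_p(a_0)+m+1$, the only difference being your contradiction framing versus the paper's direct contrapositive. Your extra care with the degenerate cases and with the passage from classical to $p$-adic forms (which the paper, following Serre, leaves implicit) is sound but does not change the route.
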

\begin{proof}  We briefly recall Serre's proof.
If $a_0=0$, then the corollary is immediate.  Suppose now that $a_0\neq 0$.  Let $g = a_0$, so $g$ is a modular form of weight $k'=0$, and
\begin{align*}
v_p(f-g) = \inf_{n\geq 1}v_p\left(a_n\right).
\end{align*}
Also, since the image of $k$ in $X_{m+1}$ is nonzero, $k\not\equiv k'$ in $X_{m+1}$.  So by Theorem \ref{weights-thm},
\begin{align*}
v_p(f-g) < v_p(g)+m+1.
\end{align*}
Consequently,
\begin{align*}
v_p\left(a_0\right)+m+1>\inf_{n\geq 1}v_p\left(a_n\right),
\end{align*}
so
\begin{align*}
v_p\left(a_0\right)+m\geq \inf_{n\geq 1}v_p\left(a_n\right).
\end{align*}
\end{proof}

\begin{cor}[COROLLAIRE 2 of \cite{serre}]\label{conv-cor}
Consider $p$-adic modular forms $f^{(i)} = \sum_{n=0}^\infty a_n^{(i)}q^m$ of weights $k^{(i)}$, for $i=1, 2, \ldots$, respectively.  Suppose that both of the following hold:
\begin{align*}
\varinjlim_i a_n^{(i)} &= a_n\in\IQ_p \mbox{ for all } n\geq 1\\
\varinjlim k^{(i)} &= k\in X, \mbox{ with } k\neq 0.
\end{align*}
Then $a_0^{(i)}$ converges $p$-adically to an element $a_0\in\IQ_p$, and $f= \sum_{n=0}^\infty a_nq^n$ is a $p$-adic modular form of weight $k$.
\end{cor}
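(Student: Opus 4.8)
The plan is to show that the sequence $f^{(i)}$ is $p$-adically Cauchy; granting this, $f:=\lim_i f^{(i)}$ is, coefficientwise, $\sum_{n\geq 0}a_nq^n$ with $a_0:=\lim_i a_0^{(i)}$, it is a $p$-adic limit of the $p$-adic modular forms $f^{(i)}$, hence itself a $p$-adic modular form (a diagonal argument reduces ``limit of $p$-adic modular forms'' to ``limit of classical modular forms''), and by the remark following Theorem \ref{weights-thm} its weight, if $f\neq 0$, is $\lim_i k^{(i)}=k$; the case $f=0$ is vacuous. So the whole content is the convergence of the constant terms, and the only tool at hand is Corollary \ref{coro1}. (I read the hypothesis on the non-constant coefficients as the \emph{uniform} statement $\inf_{n\geq 1}v_p(a_n^{(i)}-a_n)\to\infty$; some such uniformity is what the argument below uses.) Since $k\neq 0$ in $X=\varprojlim X_{m+1}$, fix $m\geq 0$ with the image of $k$ in $X_{m+1}$ nonzero; because $k^{(i)}\to k$ and $X_{m+1}$ is finite, the image of $k^{(i)}$ in $X_{m+1}$ equals that of $k$, hence is nonzero, for all large $i$.

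The obstacle is that Corollary \ref{coro1} applies to a single $p$-adic modular form, whereas to see that the constant terms are Cauchy I want to feed it $f^{(i)}-f^{(j)}$ --- which is a $p$-adic modular form only when $k^{(i)}=k^{(j)}$. To get around this I equalize the weights using powers of Serre's weight $p-1$ Eisenstein series $E_{p-1}$, which satisfies $E_{p-1}\equiv 1 \bmod p$ (see \cite{serre}); consequently, for $t\in p^{L-1}\ZZ_p$, the $p$-adic power $E_{p-1}^{\,t}$ is a $p$-adic modular form of weight $(p-1)t\in X$ with $E_{p-1}^{\,t}\equiv 1\bmod p^L$. Corollary \ref{coro1} applied to each $f^{(i)}$, together with the uniform convergence of the $a_n^{(i)}$ ($n\geq 1$), shows that the valuations $v_p(f^{(i)})$ are bounded below uniformly in $i$; after a harmless overall rescaling we may thus assume $f^{(i)}\in\ZZ_p[\![q]\!]$. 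Now fix a target precision $N$ and set $L=\max(N+m,\,m+1)$. For all $i,j$ large enough that $k^{(i)}\equiv k^{(j)}\equiv k$ in $X_L$, the element $t_{ij}:=(k^{(j)}-k^{(i)})/(p-1)$ lies in $p^{L-1}\ZZ_p$, so $g^{(ij)}:=f^{(i)}E_{p-1}^{\,t_{ij}}$ is a $p$-adic modular form of weight $k^{(j)}$ with $g^{(ij)}\equiv f^{(i)}\bmod p^L$. Then $g^{(ij)}-f^{(j)}$ is again a $p$-adic modular form of weight $k^{(j)}$, whose image in $X_{m+1}$ is nonzero (or it is $0$), so Corollary \ref{coro1} bounds its constant term by $m$ plus the infimum of its nonconstant coefficients. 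Reducing mod $p^L$ and using $g^{(ij)}\equiv f^{(i)}$ turns this into
\begin{align*}
v_p\bigl(a_0^{(i)}-a_0^{(j)}\bigr)\ \geq\ \min\Bigl(L,\ \inf_{n\geq 1}v_p\bigl(a_n^{(i)}-a_n^{(j)}\bigr)\Bigr)-m .
\end{align*}

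To conclude: by uniform convergence of the nonconstant coefficients there is $I$ with $\inf_{n\geq 1}v_p(a_n^{(i)}-a_n^{(j)})\geq N+m$ for all $i,j\geq I$ (enlarging $I$ so the congruences of the previous paragraph also hold), and then the displayed inequality gives $v_p(a_0^{(i)}-a_0^{(j)})\geq N$. Hence $(a_0^{(i)})_i$ is Cauchy in $\IQ_p$, converges to some $a_0$, $f^{(i)}\to f=\sum_{n\geq 0}a_nq^n$ uniformly, and the first paragraph finishes the proof. The step I expect to be the real work is the weight equalization: one needs the elementary congruence $E_{p-1}\equiv 1\bmod p$ and its bootstrap $E_{p-1}^{\,t}\equiv 1\bmod p^L$ for $t\in p^{L-1}\ZZ_p$, and one must check that multiplication by $E_{p-1}^{\,t_{ij}}$ preserves congruences modulo $p^L$, which is exactly where the uniform lower bound on $v_p(f^{(i)})$ --- itself a consequence of Corollary \ref{coro1} --- enters. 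The rest is bookkeeping with $p$-adic valuations.
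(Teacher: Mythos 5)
Your argument is correct, and it is essentially the proof that the paper defers to: the paper states this result as COROLLAIRE 2 of Serre without reproving it, and Serre's own argument is precisely your weight-equalization step --- multiply one form by a power of $E_{p-1}$ that is congruent to $1$ modulo a high power of $p$ so that the difference becomes a $p$-adic modular form of a single weight with nonzero image in $X_{m+1}$, and then apply Corollary \ref{coro1} to that difference to get the Cauchy estimate on the constant terms. Your reading of the hypothesis on the $a_n^{(i)}$ for $n\geq 1$ as \emph{uniform} convergence is also the right one (it is how Serre states the hypothesis, and some uniformity is genuinely needed for the estimate), so I see no gap.
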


\begin{example}[Application to $G_k$]\label{appGk}
Applying Corollary \ref{conv-cor} to a sequence of Eisenstein series $G_{k_i}$, $i=1, 2, \ldots$, with $k_i\geq 4$ and even for all $i$ and such that $k_i\rightarrow\infty$ in the archimedean metric and also $k_i\rightarrow k\in X$, we obtain a $p$-adic modular form (in fact, a {\em $p$-adic Eisenstein series}, i.e. a $p$-adic limit of Eisenstein series)
\begin{align*}
G_k^\ast:=G_k^{(p)}:=\varinjlim_iG_{k_i}=\frac{\zeta^\ast\left(1-k\right)}{2}+\sum_{n\geq 1}\sigma_{k-1}^\ast(n)q^n
\end{align*}
with $\sigma_{k-1}^\ast(n) := \sigma_{k-1}^{(p)}(n):=\sum_{\substack{d\divides n\\ p\ndivides d}}d^{k-1}$ and $\zeta^\ast(1-k):=\lim_{i\rightarrow \infty}\zeta\left(1-k_i\right)$.
\end{example}

Since the $p$-adic number $\zeta^\ast\left(1-k\right)$ is a $p$-adic limit of values of the Riemann zeta function, it is natural to ask about its relationship to values of the Kubota--Leopoldt $p$-adic zeta function.  This is given in Theorem \ref{thm:se3} below.  More generally, a consequence of Theorem \ref{fs-thm} is the construction of $p$-adic Dedekind zeta functions as elements of $\Lambda$ (i.e. as $p$-adic measures).

We say that an element of $k=(s, u)\in X = \ZZ_p\times \ZZ/(p-1)\ZZ$ is {\em even} if $k\in 2X$ (equivalently, since we are assuming $p$ is odd, $u\in 2\ZZ/(p-1)\ZZ$).  Otherwise, we say $(s, u)$ is {\em odd}.  

\begin{thm}[TH\'EOR\`EME 3 of \cite{serre}]\label{thm:se3}
If $(s, u)\neq 1$ is odd, then $\zeta^\ast(s, u) = L_p\left(s, \omega^{1-u}\right)$, where $L_p$ denotes the Kubota--Leopoldt $p$-adic zeta function from Theorem \ref{thm:KL}.
\end{thm}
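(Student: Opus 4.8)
The plan is to evaluate $\zeta^\ast(s,u)$ directly from its definition (Example~\ref{appGk}) by choosing a convenient approximating sequence of classical Eisenstein series, reading off the limit of their constant terms, and matching that against the interpolation formula of Theorem~\ref{thm:KL}. Set $k := 1-(s,u)\in X$, so that $1-k=(s,u)$; since $(s,u)$ is odd, the class $1-u$ is even, and since $(s,u)\neq 1$ we have $k\neq 0$ in $X$. First I would choose positive even integers $k_i\geq 4$ with $k_i\equiv 1-u\pmod{p-1}$, with $k_i\to +\infty$ in the archimedean absolute value, and with $k_i\to 1-s$ in $\ZZ_p$: such a sequence exists because $\ZZ$ is dense in $\ZZ_p$ and $\gcd(p-1,p)=1$ (and any integer congruent to $1-u$ mod $p-1$ is automatically even, since $1-u$ and $p-1$ are both even). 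Then $k_i\to k$ in $X$, and the nonconstant Fourier coefficients $\sigma_{k_i-1}(n)$ of $G_{k_i}$ converge to $\sigma^\ast_{k-1}(n)$ for every $n\geq 1$; so Corollary~\ref{conv-cor} applies and tells us the constant terms $\tfrac12\zeta(1-k_i)$ converge, with limit $\tfrac12\zeta^\ast(1-k)=\tfrac12\zeta^\ast(s,u)$ — independently of the chosen sequence, as one sees by interleaving two such sequences.

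The rest is a short computation. Rewrite $\zeta(1-k_i)=\zeta^{(p)}(1-k_i)\big/\bigl(1-p^{\,k_i-1}\bigr)$; since $k_i\to\infty$ archimedeanly, $v_p\!\left(p^{\,k_i-1}\right)=k_i-1\to\infty$, so $p^{\,k_i-1}\to 0$ and therefore $\lim_i\zeta(1-k_i)=\lim_i\zeta^{(p)}(1-k_i)$. Applying Theorem~\ref{thm:KL} with $\chi=\omega^{1-u}$ and $n=k_i$, the hypothesis $k_i\equiv 1-u\pmod{p-1}$ gives $\chi\omega^{-n}=1$, whence
\begin{align*}
\zeta^{(p)}(1-k_i)=\bigl(1-p^{\,k_i-1}\bigr)\Bigl(-\tfrac{B_{k_i}}{k_i}\Bigr)=L_p\bigl(1-k_i,\omega^{1-u}\bigr).
\end{align*}
Finally $1-k_i\to 1-(1-s)=s$ in $\ZZ_p$, and in the case $\omega^{1-u}=1$ (i.e.\ $u\equiv 1$) the hypothesis $(s,u)\neq 1$ forces $s\neq 1$, which is precisely where $L_p(\cdot,1)$ has its only pole; so the $p$-adic analyticity of $L_p(\cdot,\omega^{1-u})$ on the complement of that point (Theorem~\ref{thm:KL}) yields $L_p(1-k_i,\omega^{1-u})\to L_p(s,\omega^{1-u})$. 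Concatenating the limits, $\zeta^\ast(s,u)=\lim_i\zeta(1-k_i)=\lim_i\zeta^{(p)}(1-k_i)=L_p(s,\omega^{1-u})$.

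Essentially all the real content sits upstream of this argument: the analytic input is Corollary~\ref{conv-cor} (itself resting on Theorem~\ref{weights-thm} and Corollary~\ref{coro1}), which guarantees that the constant terms of a $p$-adically convergent family of Eisenstein series converge along with the nonconstant coefficients, together with the interpolation property and continuity of the Kubota--Leopoldt function furnished by Theorem~\ref{thm:KL}. Given those, there is no hard step; the points one must be careful about are bookkeeping ones — the dictionary between weights in $X$ and characters of $\ZZ_p^\times$, especially the shift $k\leftrightarrow 1-k$; the fact that oddness of $(s,u)$ forces the approximating classical weights to be even; and the role of the hypothesis $(s,u)\neq 1$ in keeping away from the pole of $L_p(\cdot,1)$. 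A more conceptual variant, if preferred, is to observe that at a classical even weight $n\geq 4$ the $p$-adic Eisenstein series $G^\ast_n$ has exactly the same $q$-expansion as the $p$-stabilization $G_n(z)-p^{\,n-1}G_n(pz)$ — the nonconstant coefficients matching via $\sigma_{n-1}(p^a m)=\bigl(1+p^{\,n-1}+\cdots+p^{\,a(n-1)}\bigr)\sigma_{n-1}(m)$ for $p\nmid m$, and the constant terms then forced equal by Corollary~\ref{coro1} applied to the difference — so $\zeta^\ast(1-n)=(1-p^{\,n-1})\zeta(1-n)=\zeta^{(p)}(1-n)$; since these classical points are dense in each odd line of $X$ (again because $p-1$ is a unit in $\ZZ_p$) and both $\zeta^\ast$ and $L_p(\cdot,\omega^{1-u})$ are continuous there, the identity of Theorem~\ref{thm:se3} follows.
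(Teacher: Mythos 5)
Your argument is correct and is essentially the paper's own proof (Serre's): both evaluate the Kubota--Leopoldt interpolation formula $L_p(1-k_i,\omega^{1-u})=(1-p^{k_i-1})\zeta(1-k_i)$ along a sequence $k_i\to k=1-(s,u)$ with $k_i\to\infty$ archimedeanly, note that $p^{k_i-1}\to 0$, and use continuity of $L_p$ together with the definition of $\zeta^\ast$ as the limit of the constant terms. Your additional bookkeeping (existence of the approximating sequence, the role of Corollary~\ref{conv-cor}, and the pole at $s=1$ when $u\equiv 1$) is accurate and merely makes explicit what the paper leaves implicit.
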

\begin{proof}  We recall Serre's proof.
If $\zeta'$ denotes the function
\begin{align*}
(s, u)\mapsto L_p\left(s, \omega^{1-u}\right),
\end{align*}
then $\zeta'$ is the Kubota--Leopoldt $p$-adic zeta function, and $\zeta'(1-k) =\left(1-p^{k-1}\right)\zeta(1-k)$ for each positive even integer $k$. 

If $k\in 2X$ (so $1-k$ is odd), $k_i\rightarrow k$ in $X$, $k_i\rightarrow\infty$ in the archimedean topology, then
\begin{align*}
\zeta'(1-k) &= \lim_{i\rightarrow\infty}\zeta'\left(1-k_i\right) = \lim_{i\rightarrow \infty}\left(1-p^{k_i-1}\right)\zeta\left(1-k_i\right)\\
& = \lim_{i\rightarrow \infty}\zeta\left(1-k_i\right) = \zeta^\ast(1-k).
\end{align*}
\end{proof}

For any fixed even $u\neq 0$ in $\ZZ/(p-1)\ZZ$, we can also prove that the function $s\mapsto \zeta^\ast(1-s, 1-u)$ arises as an element of $\Lambda:=\Lambda_\Gamma$, without reference to the work of Kubota--Leopoldt (and also without reference to the work of Iwasawa, who proved this as well).  
First, we note that for each positive integer $m\nequiv 0 \mod p-1$, the rational numbers $\zeta(1-m) = (-1)^{m+1}\frac{B_m}{m}$ were already known in the mid-1800s to be $p$-integral, thanks to the von Staudt--Clausen theorem \cite{staudt, clausen} and a result of von Staudt on numerators of Bernoulli numbers (\cite{vstaudt}, which was later rediscovered and misattributed, as discussed on \cite[p. 136]{girstmair}).  So the $p$-adic limits $\zeta^\ast(1-s, 1-u)$ are elements of $\ZZ_p$ whenever $u\neq 0$ in $\ZZ/(p-1)\ZZ$.  (Alternatively, Corollary \ref{coro1} shows that because all the higher order Fourier coefficients of the Eisenstein series $G_{(s, u)}^\ast$ are $p$-integral, so is the constant term of $G_{(s, u)}^\ast$.)  Applying Corollary \ref{conv-cor} to the $p$-adic Eisenstein series $G_{(s, u)}^\ast$ from Example \ref{appGk}, we obtain congruences for the constant terms $\zeta^\ast(1-s, 1-u)$.  So applying Lemma \ref{meas-infinite} and Theorem \ref{KC-thm}, we see that for fixed even $u\neq 0$ in $\ZZ/(p-1)\ZZ$, $\zeta^\ast(1-s, 1-u)$ can be obtained as an element of $\Gamma$.  Via different methods, Iwasawa's work also addressed the case where $u=0$ \cite{iw2}.  We summarize these results in Theorem \ref{iwasawathm}.

\begin{thm}\label{iwasawathm} Let $\Lambda:=\Lambda_\Gamma$, and fix an even $u\in \ZZ/(p-1)\ZZ$.  Then:
\begin{enumerate}
\item{If $u\neq 0$, then the function $\langle\rangle^s\mapsto \zeta^\ast(1-s, 1-u)$ is an element of $\Lambda:=\Lambda_\Gamma$.}\label{myproof}
\item{The function $s\mapsto \zeta^\ast (1-s, 1)^{-1}$ is an element $\Lambda\cong \ZZ_p[\![T]\!]$, and moreover, is of the form $Tg(T)$ with $g(T)$ invertible in $\ZZ_p[\![T]\!]$.}\label{iproof}
\end{enumerate}
\end{thm}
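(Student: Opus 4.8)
The plan is to prove the two parts separately: part (1) by the $p$-adic-modular-forms argument sketched just above the statement, and part (2) by invoking Iwasawa's construction \cite{iw2} and then reading off the shape of the power series from the valuations of the relevant special values.

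For part (1), fix the even residue $u\neq 0$ and let $I$ be the set of integers $k\geq 4$ with $k\equiv u\pmod{p-1}$; every such $k$ is then even and $\not\equiv 0\pmod{p-1}$. The goal is to realize $\langle\rangle^{s}\mapsto\zeta^\ast(1-s,1-u)$ as a $\ZZ_p$-valued measure $\mu$ on $\Gamma$ by applying Theorem \ref{KC-thm} to the functions $f_k:=\langle\rangle^{k}$ on $\Gamma$ and the target values $a_k:=\zeta^\ast(1-k,1-u)$, $k\in I$. First I would check that $a_k\in\ZZ_p$: for every $s$, the higher Fourier coefficients $\sigma^\ast_{(s,u)-1}(n)=\sum_{d\mid n,\,p\nmid d}\omega(d)^{u-1}\langle d\rangle^{s-1}$ of $G^\ast_{(s,u)}$ lie in $\ZZ_p$, so, since the image of the weight $(s,u)$ in $X_1=\ZZ/(p-1)\ZZ$ is $u\neq 0$, Corollary \ref{coro1} with $m=0$ forces the constant term $\zeta^\ast(1-s,1-u)/2$ into $\ZZ_p$ as well. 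Next I would verify the abstract Kummer congruences, the crucial identity being that for $p\nmid d$ one has $\langle d\rangle\in\Gamma$ and, since $k\equiv u\pmod{p-1}$, $d^{k-1}=\omega(d)^{u-1}\langle d\rangle^{-1}\cdot f_k(\langle d\rangle)$: thus a finitely supported family $\{b_k\}$ with $\sum_k b_k f_k(a)\in p^N\ZZ_p$ for all $a\in\Gamma$ forces every higher Fourier coefficient $\sum_k b_k\sigma^\ast_{(k,u)-1}(n)$ of the $p$-adic modular form $\sum_k b_k G^\ast_{(k,u)}$ into $p^N\ZZ_p$, and Serre's control of the constant term then puts $\sum_k b_k a_k$ into $p^N\ZZ_p$. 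Granting the density hypothesis of Theorem \ref{KC-thm} --- that the $f_k$, $k\in I$, span a uniformly dense subspace of $\mathcal{C}(\Gamma,\IQ_p)$, a routine check via Mahler expansions and the Weierstrass preparation theorem --- this produces $\mu$; Lemma \ref{meas-infinite} then makes $\mu$ unique and extends the interpolation formula to $\int_\Gamma\langle\rangle^{s}\,d\mu=\zeta^\ast(1-s,1-u)$ for all $s\in\ZZ_p$, and the image $\alpha_\mu\in\Lambda_\Gamma$ of $\mu$ under the isomorphism $\psi$ of \eqref{dictionary-iso} is the assertion of (1).

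The step I expect to be the crux of (1) is the appeal to ``Serre's control of the constant term'': Corollary \ref{coro1} is stated for a single $p$-adic modular form of one weight in $X$, whereas $\sum_k b_k G^\ast_{(k,u)}$ is a finite $\IQ_p$-combination of forms of the distinct weights $(k,u)$. What is needed is the extension of the estimate of Corollary \ref{coro1} to such mixed-weight combinations when every weight involved has nonzero image in $X_1$; this follows from Th\'eor\`eme~1 of \cite{serre} but goes slightly beyond Corollaire~1, and it is where the real work of Serre's argument lies. (Equivalently, one can try to extract the required congruences from the convergence statement of Corollary \ref{conv-cor}.)

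For part (2) the weight is $(s,0)$, whose image in $X_1$ vanishes, so the argument of (1) degenerates as $s\to 0$: by the von Staudt--Clausen theorem, $v_p(\zeta^{(p)}(1-k))=v_p(B_k/k)=-1-v_p(k)$ for $k>0$ with $p-1\mid k$, so, identifying $\zeta^\ast(1-s,1)$ with the Kubota--Leopoldt function $L_p(1-s,1)$ for $s\neq 0$ via Theorem \ref{thm:se3}, $\zeta^\ast(1-s,1)$ genuinely has a pole at the trivial character, and it is its reciprocal that must lie in $\Lambda$. The plan is to invoke Iwasawa's construction \cite{iw2} for the pseudo-measure property: the function $\langle\rangle^{s}\mapsto\big((1+p)^{s}-1\big)\,\zeta^\ast(1-s,1)$ is an element $\widetilde G(T)$ of $\Lambda\cong\ZZ_p[\![T]\!]$, where $T$ corresponds to $\gamma-1$ with $\gamma=1+p$. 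Since $v_p\big((1+p)^{k}-1\big)=1+v_p(k)$, the valuation count above gives $\widetilde G\big((1+p)^{k}-1\big)=\big((1+p)^{k}-1\big)\zeta^{(p)}(1-k)\in\ZZ_p^\times$ for every $k>0$ with $p-1\mid k$; taking $k=p-1$, say --- so that the point $(1+p)^{p-1}-1$ lies in $p\ZZ_p$ --- shows $\widetilde G(0)\in\ZZ_p^\times$, i.e.\ $\widetilde G\in\ZZ_p[\![T]\!]^\times$. (Concretely $\widetilde G(0)=-(1-1/p)\log(1+p)$, which is the residue at $s=1$ of the Kubota--Leopoldt $p$-adic zeta function times $\log(1+p)$, a $p$-adic unit.) Hence $\zeta^\ast(1-s,1)^{-1}=T\cdot\widetilde G(T)^{-1}=T\,g(T)$ with $g:=\widetilde G^{-1}\in\ZZ_p[\![T]\!]^\times$, which is (2). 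Here the hard part is the pseudo-measure property itself: the $p$-adic-modular-forms machinery above yields only the estimate $v_p\big((1+p)^{s}-1\big)+v_p\big(\zeta^\ast(1-s,1)\big)\geq 0$ (apply Corollary \ref{coro1} to the single-weight form $G^\ast_{(s,0)}$), i.e.\ boundedness of $\big((1+p)^{s}-1\big)\,\zeta^\ast(1-s,1)$, and promoting boundedness to membership in $\Lambda$ is exactly the extra input supplied by Iwasawa's argument --- alternatively, one can extract it from the Bernoulli-type measure $E_c$ of \eqref{equ:ec}, at the price of reinstating the explicit Kubota--Leopoldt construction.
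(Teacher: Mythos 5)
Your proposal is correct and follows essentially the same route as the paper: part (1) is exactly the argument given in the paragraph preceding the theorem ($p$-integrality of the constant terms via von Staudt--Clausen or Corollary \ref{coro1}, congruences among them coming from the higher Fourier coefficients of the $G^\ast_{(k,u)}$, then Lemma \ref{meas-infinite} and Theorem \ref{KC-thm}), and for part (2) the paper, like you, defers the key input to Iwasawa \cite{iw2}. Where you go beyond the paper --- flagging that Corollary \ref{coro1} must be extended to mixed-weight combinations $\sum_k b_k G^\ast_{(k,u)}$ before the abstract Kummer congruences can actually be checked, and deriving the $Tg(T)$ shape from the unit value of $\bigl((1+p)^s-1\bigr)\zeta^\ast(1-s,1)$ via von Staudt--Clausen --- you are supplying details the paper leaves implicit, and both are handled correctly.
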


For $n\geq 1$ the $n$th Fourier coefficient of the $p$-adic Eisenstein series $G_{k}^\ast$, with $k=(s,u)$, is of the form $\sigma_{k-1}^\ast(n)=\sum_{\substack{d\divides n\\ p\ndivides d}}d^{-1}\omega(d)^k\langle d\rangle^k=\sum_{\substack{d\divides n\\ p\ndivides d}}d^{-1}\omega(d)^u\langle d\rangle^s$, which gives an element of $\Lambda$, when we fix $u$.  Consequently, for fixed $u\neq 0$, the coefficients of $G_{(s,u)}^\ast$ can be viewed as elements of $\Lambda$ (by Theorem \ref{iwasawathm}, Part \eqref{myproof}), and furthermore, for $u=0$, the coefficients of the normalized Eisenstein series $E_{s}^\ast := (\zeta^\ast(1-s, 1)/2)^{-1}G_{(s, 0)}^{\ast}$ can be viewed as elements of $\Lambda$ (by Theorem \ref{iwasawathm}, Part \eqref{iproof}).

More generally, we have the following result.

\begin{thm}[TH\'EOR\`EME 17 and TH\'EOR\`EME 18 of \cite{serre}]\label{fs-thm}
Let $f_s$ be a $p$-adic modular form of weight $k(s) = (sr, u)\neq 0$ for some fixed $r$ and $u$.  Suppose the function $\langle \rangle^s\mapsto a_n\left(f_s\right)$ is in $\Lambda:=\Lambda_\Gamma$ for all $n\geq 1$.
\begin{enumerate}
\item{If $u\neq 0$ in $\ZZ/(p-1)\ZZ$, then the same is true for $n=0$.}\label{fs-thm1}
\item{If $u = 0$ in $\ZZ/(p-1)\ZZ$, then $\langle\rangle^s\mapsto \zeta^\ast(1-rs, 1)^{-1}a_0\left(f_s\right)$ is in $\Lambda$.}\label{fs-thm2}
\end{enumerate}
\end{thm}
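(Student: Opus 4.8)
The plan is to verify, in each case, the abstract Kummer congruences of Theorem~\ref{KC-thm} for the appropriate list of values and then apply Lemma~\ref{meas-infinite} to pin down the resulting element of $\Lambda$. Concretely, take $Y=\Gamma$, index set $I=\ZZ_{\geq 0}$ if $u\neq 0$ and $I=\ZZ_{\geq 1}$ if $u=0$, and as uniformly dense family the power characters $\langle\rangle^k\colon\Gamma\to\ZZ_p$, $k\in I$ (their $\IQ_p$-span is uniformly dense in $\mathcal{C}(\Gamma,\IQ_p)$ by the ultrametric Stone--Weierstrass theorem, using that $\langle\rangle$ separates points and $\langle\rangle^{p^m}\to 1$). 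Write $A_n\in\ZZ_p[\![T]\!]\cong\Lambda$ for the element attached to $\langle\rangle^s\mapsto a_n(f_s)$ and $\nu_n$ for the corresponding $\ZZ_p$-valued measure, $n\geq 1$. The first, purely formal, step transports a congruence among characters to one among non-constant Fourier coefficients: if $\{b_k\}_{k\in I}\subset\IQ_p$ has finite support and $\varphi:=\sum_k b_k\langle\rangle^k\in p^N\mathcal{C}(\Gamma,\ZZ_p)$, then for every $n\geq 1$
\[
\sum_k b_k a_n(f_k)=\sum_k b_k\int_\Gamma\langle\rangle^k\,d\nu_n=\int_\Gamma\varphi\,d\nu_n\in p^N\ZZ_p
\]
because $\nu_n$ is $\ZZ_p$-valued; equivalently, the finite $\IQ_p$-combination $g:=\sum_k b_k f_k\in\IQ_p[\![q]\!]$ has all non-constant coefficients in $p^N\ZZ_p$. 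For Part~\eqref{fs-thm2} one replaces each $\nu_n$ by the product $Z\cdot\nu_n\in\Lambda$, where $Z\in\Lambda$ corresponds to $\langle\rangle^s\mapsto\zeta^\ast(1-rs,1)^{-1}$ (this lies in $\Lambda$ by Theorem~\ref{iwasawathm}\eqref{iproof}, since $s\mapsto rs$ is an endomorphism of $\Gamma$), obtaining $\sum_k b_k\zeta^\ast(1-rk,1)^{-1}a_n(f_k)\in p^N\ZZ_p$ for $n\geq 1$.

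The substantive step transfers this to the constant coefficient, via Serre's Corollaire~1 (Corollary~\ref{coro1}) — or, what I would use here, the following consequence of Theorem~\ref{weights-thm}, proved by induction on $N$: \emph{modulo $p^N$, $p$-adic modular forms whose weights lie in distinct residue classes of $X_N=\ZZ/(p-1)p^{N-1}\ZZ$ have $q$-expansions that span an internal direct sum}. For Part~\eqref{fs-thm1}, when $u\neq 0$ every weight $(kr,u)$ lies in a \emph{nonzero} class of each $X_N$. Since the non-constant coefficients of $g$ lie in $p^N\ZZ_p$ (replacing the $b_k$ by $p^t b_k\in\ZZ_p$ merely replaces $N$ by $N+t$), the reduction $g\bmod p^N$ equals the constant $\overline{a_0(g)}$; but it also lies in the span of the classes $[(kr,u)]$, all $\neq[0]$, so by directness of the sum it vanishes. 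Hence $\sum_k b_k a_0(f_k)\in p^N\ZZ_p$, which is the required Kummer congruence; combined with the integrality of $a_0(f_k)$, itself Corollary~\ref{coro1} with $m=0$, Theorem~\ref{KC-thm} and Lemma~\ref{meas-infinite} give $\langle\rangle^s\mapsto a_0(f_s)\in\Lambda$.

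For Part~\eqref{fs-thm2}, $u=0$, the weights $(kr,0)$ fall into the zero class of $X_N$ whenever $v_p(k)$ is large, so the weight-zero graded piece now genuinely obstructs the argument; indeed Corollary~\ref{coro1} only gives $v_p(a_0(f_s))\geq -(v_p(s)+1)$, and $a_0(f_s)$ need not be $\ZZ_p$-valued. The factor $\zeta^\ast(1-rs,1)^{-1}$ is tailored to cancel exactly this loss: by Theorem~\ref{iwasawathm}\eqref{iproof} it is $T$ times a unit of $\ZZ_p[\![T]\!]$, so its value at $\langle\rangle^k$ has valuation $v_p((1+p)^{rk}-1)=1+v_p(k)$, matching the defect $m=v_p(k)+1$ in Corollary~\ref{coro1} for weight $(kr,0)$; thus $\zeta^\ast(1-rk,1)^{-1}a_0(f_k)\in\ZZ_p$. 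One then reruns the weight-graded argument on $\sum_k b_k\zeta^\ast(1-rk,1)^{-1}f_k$, handling the weight-zero piece directly: it is controlled by the normalized Eisenstein series $E^\ast_{rk}=(\zeta^\ast(1-rk,1)/2)^{-1}G^\ast_{(rk,0)}$ of Section~\ref{sec:pmforms}, whose constant term is $1$, so the normalization absorbs precisely the contribution Corollary~\ref{coro1} cannot bound. This yields the Kummer congruences for the values $\zeta^\ast(1-rk,1)^{-1}a_0(f_k)$, and hence $\langle\rangle^s\mapsto\zeta^\ast(1-rs,1)^{-1}a_0(f_s)\in\Lambda$.

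I expect the main obstacle to be exactly this weight-zero graded piece in Part~\eqref{fs-thm2}: showing that a \emph{single} factor of $\zeta^\ast(1-rs,1)^{-1}$, and no higher power, offsets Corollary~\ref{coro1}'s loss uniformly in $k$. This is where one needs the sharp form of Theorem~\ref{iwasawathm}\eqref{iproof} — a simple zero ($T$ times a unit) at the trivial-zero weight, which is itself the feature distinguishing the $u=0$ case — together with a structural description of mod-$p^N$ modular forms whose weight is divisible by $(p-1)p^{N-1}$. Part~\eqref{fs-thm1} is comparatively soft, using only that the weights $(kr,u)$ avoid the zero class in every $X_N$.
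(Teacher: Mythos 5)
Your Part \eqref{fs-thm1} is essentially a fleshed-out version of the route the paper itself only sketches (abstract Kummer congruences via Theorem \ref{KC-thm} together with Lemma \ref{meas-infinite}, with weight-separation doing the real work); for the theorem as a whole the paper defers to Serre, whose proof instead passes to $f'_s:=f_sE^\ast_{-rs}$, a family of constant weight $(0,u)$ with the same constant terms. Two remarks on your Part \eqref{fs-thm1}. The mod-$p^N$ direct-sum decomposition by weight classes in $X_N$ is the right tool, but it is a genuine strengthening of Theorem \ref{weights-thm}, which compares only two forms; the induction you allude to is really on the number of weight classes, not on $N$, and needs the structure theory of the algebra of modular forms mod $p^m$ from \cite{serre}. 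Since that statement is available there, this is a citation gap rather than a mathematical one, and with it your argument for Part \eqref{fs-thm1} is complete: integrality of each $a_0(f_k)$ from Corollary \ref{coro1} with $m=0$, the congruences from directness (the class of $(kr,u)$ in $X_N$ is never the zero class because $u\neq 0$), and then Theorem \ref{KC-thm} and Lemma \ref{meas-infinite}.

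Part \eqref{fs-thm2} has a genuine gap. Your valuation bookkeeping does show that each individual value $\zeta^\ast(1-rk,1)^{-1}a_0(f_k)$ lies in $\ZZ_p$ --- the simple zero furnished by Theorem \ref{iwasawathm}\eqref{iproof} exactly offsets the loss in Corollary \ref{coro1} --- but Theorem \ref{KC-thm} requires the congruences $\sum_k b_k\,\zeta^\ast(1-rk,1)^{-1}a_0(f_k)\in p^N\ZZ_p$, and the weight-graded argument cannot deliver them here: modulo $p^N$ the constant $a_0$ and the contributions of every $f_k$ with $p^{N-1}\mid rk$ all land in the \emph{same} (zero) graded piece of $X_N$, so directness separates nothing, and the sentence ``it is controlled by the normalized Eisenstein series \ldots\ so the normalization absorbs precisely the contribution'' is a heuristic rather than an argument --- nothing in it produces a bound on the renormalized constant terms modulo $p^N$. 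This is exactly the point at which Serre changes strategy: he replaces $f_s$ by $f_sE^\ast_{-rs}$, reducing to a family of fixed weight $(0,u)$ whose coefficients still lie in $\Lambda$, and then invokes a separate structural theorem about the weight-zero graded piece (the content of his Th\'eor\`emes 16--18) to conclude that $\zeta^\ast(1-rs,1)^{-1}a_0$ is Iwasawa-analytic. Some additional input of that kind about weight-zero families is unavoidable in the $u=0$ case, and your proposal, which correctly locates the difficulty, does not supply it.
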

\begin{proof}

Serre's proof of each part of this theorem involves a careful analysis of the element $f_s':=f_sE_{-rs}^\ast\in \Lambda$, which is of weight $(0, u)$ and has the same constant term as $f_s$.  Since we will not need the details in this paper, we do not elaborate here and instead refer the reader to \cite[proofs of Theorems 17 and 18]{serre}.  

We note, however, that we can also give an alternate proof for Part \eqref{fs-thm1}, i.e. when $u\neq 0$, using the results developed thus far in the present paper:  If $a_n\left(f_s\right)$ is in $\Lambda$ for all $n\geq 1$, then Corollaries \ref{coro1} (with $m=0$) and \ref{conv-cor} guarantee the constant terms meet the conditions necessary to apply Lemma \ref{meas-infinite} and Theorem \ref{KC-thm}, so we can realize the constant term $a_0\left(f_s\right)$ as an element of $\Lambda$.
\end{proof}

Serre uses Theorem \ref{fs-thm} to obtain a $p$-adic Dedekind zeta function $\zeta_K^\ast$, for $K$ a totally real number field, as an element of $\Lambda$ (where $\zeta_K^\ast$ is defined analogously to $\zeta^\ast$ and occurs as the constant term of an Eisenstein series).

\section{Hilbert modular forms and $L$-functions attached to Hecke characters}\label{sec:hilbert}

Serre's use of $p$-adic families of Eisenstein series to construct $p$-adic zeta functions inspired constructions in other contexts.  We now summarize a generalization to the space of $p$-adic Hilbert modular forms, where realizations of Eisenstein measures enabled the construction of $p$-adic $L$-functions attached to Hecke characters of totally real or CM fields.

\subsection{The strategy of Deligne--Ribet}
Our goal now is to introduce the strategy of Deligne--Ribet from \cite{DR} to $p$-adically interpolate values of $L(s,  \rho)$ for $\rho$ a finite order Hecke character of a totally real field $K$ unramified away from $p$.  Note that for negative integers $s$, $L(s, \rho)$ lies in the field extension $\IQ(\rho)$ obtained by adjoining all values of $\rho$ to $\IQ$. Following the conventions established in Section \ref{conventions-section}, for any number field $F$, we denote by $F\left(p^\infty\right)$ the maximal abelian extension of $F$ that is unramified away from $p$.

\begin{thm}[Main Theorem (8.2) of \cite{DR}]\label{DR-thm}
Fix a totally real field $K$ and a prime-to-$p$ ideal $\mathfrak{A}$ of $K$.  Then there exists a $\ZZ_p$-valued $p$-adic measure $\mu_{\mathfrak{A}}$ on $G:=\Gal\left(K\left(p^\infty\right)/K\right)$ such that for all positive integers $k$ and finite order characters $\rho$ on $G$,
\begin{align*}
\int_G\rho\cdot\mathbf{N}^k d\mu_\alpha = \left(1-\rho(\mathfrak{A})\mathbf{N}\mathfrak{A}^{k+1}\right)L^{(p)}\left(-k, \rho\right),
\end{align*}
where $\mathbf{N}$ denotes the norm and $L^{(p)}(-k, \rho) = \prod_{\mathfrak{p}\divides p}(1-\rho(\mathfrak{p})\mathbf{N}(\mathfrak{p})^{k}) L(-k, \rho)$.
\end{thm}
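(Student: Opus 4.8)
\emph{Proof plan.} The strategy is to transplant Serre's method of Section~\ref{sec:pmforms} to $p$-adic Hilbert modular forms over $K$, organizing the construction around a single Eisenstein series with coefficients in the Iwasawa algebra $\Lambda_G := \ZZ_p[\![G]\!]$, where $G = \Gal(K(p^\infty)/K)$. First I would write down, for each parallel weight $k \geq 1$, the Hilbert--Eisenstein series whose non-constant Fourier coefficient indexed by an integral ideal $\mathfrak n$ is the divisor sum $\sum_{\mathfrak b \mid \mathfrak n}\mathbf N\mathfrak b^{\,k-1}$ and whose constant terms at the several cusps are, by the classical computations of Hecke, Siegel and Klingen, the partial zeta values of $K$ attached to ideal classes --- so that summing over the classes in a ray class recovers $\zeta_K(1-k)$, and twisting by a finite order character $\rho$ of $G$ recovers $L(1-k,\rho)$, in each case up to an explicit elementary constant. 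I would then $p$-stabilize, replacing the divisor sums by $\sum_{\mathfrak b\mid\mathfrak n,\,(\mathfrak b,p)=1}\mathbf N\mathfrak b^{\,k-1}$ and the complex $L$-factor by $L^{(p)}$, and --- in exact analogy with the auxiliary constant $c$ in Equation~\eqref{equ:ec}, which already plays this role in the Kubota--Leopoldt case --- smooth by the prime-to-$p$ ideal $\mathfrak A$: one passes to a Hilbert modular form $G^{\mathfrak A}_k$, still holomorphic, whose constant term at the cusp attached to a class $\mathfrak c$ is $(1 - [\mathfrak A]\,\mathbf N\mathfrak A^{\,k+1})$ times the $p$-stabilized partial zeta value at $\mathfrak c$. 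The smoothing clears the denominators in the constant term (and disposes of the pole of $\zeta_K$, just as it disposes of the trivial character in the Kubota--Leopoldt setting), so every Fourier coefficient of $G^{\mathfrak A}_k$ is $p$-integral.

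The organizing observation is that the non-constant coefficient indexed by $\mathfrak n$ can be read as the element $\sum_{\mathfrak b\mid\mathfrak n,\,(\mathfrak b,p)=1}[\mathfrak b]$ of $\Lambda_G$, with $[\mathfrak b] \in G$ the class of $\mathfrak b$ under the Artin map; thus the $G^{\mathfrak A}_k$ for varying $k$, and more generally the $\rho$-twists, are the images of one $\Lambda_G$-adic Eisenstein series $\mathcal G^{\mathfrak A}$ under the specialization homomorphisms $[g]\mapsto \rho(g)\mathbf N(g)^k$. The crux is then a $q$-expansion principle for Hilbert modular forms controlling \emph{constant terms}: congruences among the non-constant Fourier coefficients of a family of Hilbert modular forms of varying parallel weight must force the same congruences on the constant terms. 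This is the Hilbert-modular analogue of Theorem~\ref{weights-thm} and Corollary~\ref{coro1}, and it is the main obstacle --- both because it rests on a genuine theory of $p$-adic Hilbert modular forms and because Hilbert modular varieties have several cusps, so the expansion at a single cusp does not a priori detect every constant term. Deligne--Ribet resolve this with a $q$-expansion principle that uses the expansions at all cusps simultaneously, together with the fact that the Eisenstein series span enough of the space of forms of each weight, ultimately reducing matters to an explicit congruence among divisor sums (a Hilbert-modular companion of the von Staudt--Clausen congruences, combined with the weight-congruence argument of Theorem~\ref{weights-thm}). Granting it, the constant term of $\mathcal G^{\mathfrak A}$ also lies in $\Lambda_G$, and, being bounded, it defines a $\ZZ_p$-valued measure $\mu_{\mathfrak A}$ on $G$.

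It remains to identify $\mu_{\mathfrak A}$. Equivalently --- and this is the route most in keeping with Section~\ref{sec:conven} --- one checks that the prescribed values $(1-\rho(\mathfrak A)\mathbf N\mathfrak A^{\,k+1})L^{(p)}(-k,\rho)$ satisfy the abstract Kummer congruences of Theorem~\ref{KC-thm} with $Y = G$ and $\{f_i\} = \{\rho\cdot\mathbf N^k : \rho\text{ finite order},\,k \in \ZZ_{\geq 1}\}$, whose $\IC_p$-span is uniformly dense in $\mathcal{C}(G,\IC_p)$ (by Stone--Weierstrass, since the finite order characters already separate points and $\mathbf N$ is a character with values in $\ZZ_p^\times$): the congruences hold for the non-constant Fourier coefficients tautologically, and the $q$-expansion principle of the previous paragraph propagates them to the constant term. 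Either way one obtains the measure $\mu_{\mathfrak A}$ with $\int_G \rho\cdot\mathbf N^k\,d\mu_{\mathfrak A} = (1-\rho(\mathfrak A)\mathbf N\mathfrak A^{\,k+1})L^{(p)}(-k,\rho)$ for all finite order $\rho$ on $G$ and all $k \geq 1$; uniqueness is Proposition~\ref{prop:half}. The residual bookkeeping --- tracking the elementary normalizing constant so that it is absorbed, verifying that the $\mathfrak A$-smoothing removes every denominator, and noting that $\mathbf N^k$ stays $p$-adically bounded on $G$ --- is routine and I would not belabor it.
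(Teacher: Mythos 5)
Your architecture is the one the paper sketches — a Hilbert Eisenstein series whose higher Fourier coefficients are prime-to-$p$ divisor sums living in $\Lambda_G$ and whose constant term carries the $L$-value, a $q$-expansion principle to propagate the congruences from the non-constant coefficients to the constant term, and the abstract Kummer congruences of Theorem \ref{KC-thm} to package the output as a measure. But your key step is misstated in a way that matters. You assert a principle under which congruences among the non-constant coefficients ``force the same congruences on the constant terms,'' and you conclude that the constant term of $\mathcal{G}^{\mathfrak A}$ itself lies in $\Lambda_G$. No such single-cusp statement is available; this is exactly the difficulty the paper flags when it says a new approach is needed for the constant terms. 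What the $p$-adic $q$-expansion principle (Theorem \ref{thm-qpcoeff}) actually delivers is Corollary \ref{cor-diff}: if all non-constant coefficients at one cusp lie in $R_0$, then only the \emph{difference} between the constant terms at any two cusps is forced into $R_0$ — one subtracts the constant term, viewed as a weight-zero form, and applies the $q$-expansion principle to what remains, so an individual constant term is never controlled.

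This is not cosmetic, because it is the true source of the factor $\left(1-\rho(\mathfrak A)\mathbf N\mathfrak A^{k+1}\right)$. You introduce that factor as an $\mathfrak A$-smoothing performed on the Eisenstein series, by analogy with the constant $c$ in Equation \eqref{equ:ec}, and then need to interpolate the constant term of the smoothed form at a single cusp — which is the unavailable statement. In Deligne--Ribet the factor instead falls out of the cusp geometry: the constant term of $E_{k,\rho}$ at the cusp corresponding to the fractional ideal $\mathfrak A$ equals $\rho(\mathfrak A)$ times the appropriate power of $\mathbf N\mathfrak A$ times the constant term at the standard cusp (this is \cite[Theorem (6.1)]{DR}), so the quantity Corollary \ref{cor-diff} lets you interpolate is precisely the difference of the two constant terms, namely $\left(1-\rho(\mathfrak A)\mathbf N\mathfrak A^{k+1}\right)L^{(p)}(-k,\rho)$ in the normalization of the theorem. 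Two smaller corrections: the $q$-expansion principle rests on irreducibility of the relevant moduli spaces (Proposition \ref{alg-qexp}, via \cite{ribetirred, ra}), not on Eisenstein series spanning the space of forms of each weight, and no von Staudt--Clausen-type congruence for the constant terms is needed once Corollary \ref{cor-diff} is in hand. With the difference-of-cusps statement substituted for your single-constant-term claim, the rest of your plan (density of the characters $\rho\cdot\mathbf N^k$, Theorem \ref{KC-thm}, uniqueness via Proposition \ref{prop:half}) goes through.
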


To prove Theorem \ref{DR-thm}, Deligne and Ribet work in the space of Hilbert modular forms.  As one might expect from Serre's approach to constructing the $p$-adic zeta function, one step toward proving Theorem \ref{DR-thm} is the construction of Eisenstein series (this time, in the space of Hilbert modular forms) of weight $k$ with $L(1-k, \rho)$ as the constant term, for each positive integer $k$.  Similarly to the Eisenstein series $G_k^\ast$, it is easy to see the non-constant terms of the Eisenstein series in \cite{DR} satisfy congruences as the weight $k$ varies.  
Now that we are in the setting of Hilbert modular forms, though, we need a new approach to proving that the constant terms satisfy congruences.  This requires the theory of $p$-adic Hilbert modular forms and $q$-expansion principles, which require more geometry than the discussion thus far.

\subsubsection{Ingredients from the theory of $p$-adic Hilbert modular forms}\label{sec:pHMF}
We briefly delve into the setup of $p$-adic Hilbert modular forms, the space where the families of Eisenstein series from \cite{kaCM, DR} live.  For more details, see \cite[Chapter 4]{hida}, \cite[Chapter I]{kaCM}, or \cite{goren}.  

We can give a formulation of Hilbert modular forms as sections of line bundles over a moduli space $\mathcal{M}$ of Hilbert--Blumenthal abelian varieties (with additional structure).  More precisely, fix a totally real number field $K$ of degree $g$ over $\IQ$, a fractional ideal $\mathfrak{c}$ of $K$, and an integer $N\geq 4$ prime to $p$.  Let $\OK$ denote the ring of integers in $K$, and let $\mathfrak{d}^{-1}$ denote its inverse different.  There is a scheme $\mathcal{M}:=M\left(N, \mathfrak{c}\right)$ over $\Spec\left(\OK\right)$ classifying triples $(X, i, \lambda)$, consisting of an abelian scheme $X$ of relative dimension $g$ together with an action of $\OK$ on it, a level structure $i: \mathfrak{d}^{-1}\otimes_{\ZZ}\mu_N\hookrightarrow X $, and a $\mathfrak{c}$-polarization $\lambda: X^\vee\isomto X\otimes_{\OK}\mathfrak{c}$ (where $X^\vee$ denotes the dual abelian scheme to $X$).

We denote by $\pi:\Auniv\rightarrow \mathcal{M}$ the universal object, and we define $\uo:=\pi_\ast\Omega^1_{\Auniv/\mathcal{M}}.$  
  The space of Hilbert modular forms of weight $\left(k(\sigma)\right)_{\sigma: K\hookrightarrow \IR}$ is identified with $H^0\left(\mathcal{M}, \boxtimes_\sigma\uo\left(k_\sigma\right)\right)$.  Note that $\uo\left(k_\sigma\right)$ is a subsheaf of $\Sym^k(\uo)$, where $k = \sum_\sigma k(\sigma)$.  Note that there exists a smooth toroidal compactification $\overline{\mathcal{M}}$ of $\mathcal{M}$ that includes the cusps of $\mathcal{M},$ and the universal abelian scheme $\Auniv$ extends to the universal semi-abelian scheme over $\overline{\mathcal{M}}$.  Also note that when $[K:\IR]>1,$ K\"ocher's principle guarantees that a Hilbert modular form over $\mathcal{M}$ extends holomorphically to the cusps.  As explained in \cite[Example 5.3]{DR}, \cite[Section 4.1]{hida} (see also \cite[Section 1.1]{kaCM} on algebraic $q$-expansions), when we work over a $\IQ$-algebra $R$ (for example, $R=\IC$), the cusps are in bijection with fractional ideals $\mathfrak{A}$ of $K$ (which we will call the ``cusp corresponding to $\mathfrak{A}$''). 

\begin{rmk}
While we shall not need this fact here (as we are working in settings specific to Katz and Deligne--Ribet), it is worth noting that as discussed in \cite[p. 2-3]{AIP2}, the notion of ``Hilbert modular form'' for $F\neq \IQ$ varies slightly depending on where in the literature one looks.  More precisely, the moduli problem represented by $\mathcal{M}=M\left(N, \mathfrak{c}\right)$ corresponds to the group $G^\ast = G\times_{\mathrm{Res}_{K/\IQ}\mathbb{G}_m} \mathbb{G}_m$, where $G=\mathrm{Res}_{K/\IQ}\GL_2$, $G\rightarrow \mathrm{Res}_{K/\IQ}\mathbb{G}_m$ is the determinant morphism, and $\mathbb{G}_m\rightarrow \mathrm{Res}_{K/\IQ}\mathbb{G}_m$ is the diagonal embedding.  On the other hand, there are also approaches to eigenforms on the group $G$, but the moduli problem for $G$ is not representable.  For further discussion about the relationship between automorphic forms on these two spaces, see \cite[p. 2-3]{AIP2}.
\end{rmk}

Let $W$ denote the ring of Witt vectors associated to an algebraic closure of $\ZZ/p\ZZ$, and let $W_m = W/p^m W$.  We identify $W$ with the ring of integers in the maximal unramified extension of $\IQ_p$ inside an algebraic closure of $\IQ_p$.  We fix an embedding $K\hookrightarrow \bar{\IQ}_p$.  The image of $\OK$ under this embedding lies in $W$. 

The space of $p$-adic Hilbert modular forms is defined over the ordinary locus $\mathcal{M}^\ord$ (inside of $\mathcal{M}\times_{\Spec \OK}\Spec W$), which can be described as the nonvanishing locus of a lift of the Hasse invariant, like in \cite[Section 4.1.7]{hida}.  More precisely, the space of $p$-adic Hilbert modular forms is realized as follows.  We build an {\em Igusa tower} over $\mathcal{M}^\ord$ (as in, e.g, \cite[Section 8.1.1]{hida}).
 For each pair of positive integers $n, m$, $\Ig_{n,m}$ is defined to be a cover of $\mathcal{M}^\ord\times_W W_m$ classifying ordinary Hilbert--Blumenthal abelian varieties $A$ together with level $p^n$-structure $\mu_{p^n}\hookrightarrow A[p^n]$.  So we have canonical maps $\Ig_{n,m}\rightarrow \Ig_{n',m}$ for all $n'\geq n$ (and likewise for $m'\geq m$), giving us a tower of schemes.
Following the notation of \cite[Section 8.1.1]{hida},
we set
\begin{align*}
V_{n,m}&:=H^0\left(\Ig_{n,m}, \CO_{\Ig_{n,m}}\right)\\
V_{\infty, m} &:= \varinjlim_n V_{n,m}.
\end{align*}

Following \cite[Section 1.9]{kaCM} (or the more general discussion from \cite[Section 8.1.1]{hida}), the space of $p$-adic Hilbert modular forms is then
\begin{align*}
V:=V_{\infty, \infty}:=\varprojlim_m V_{\infty, m}.
\end{align*}

We identify $V_{\infty, \infty}$ with the ring of global sections of the structure sheaf of a formal scheme parametrizing Hilbert--Blumental abelian varieties with $p^\infty$-level structure.

An advantage of this construction is that is provides a canonical map from the space of Hilbert modular forms to the space $V$ of $p$-adic Hilbert modular forms (as in, e.g., \cite[Theorem (1.10.15)]{kaCM}).

\begin{rmk}
More generally, this construction can be modified to produce $p$-adic automorphic forms in other cases, such as in the setting of Shimura varieties of PEL type.  For a detailed treatment, see \cite{hida, CEFMV, EiMa}.
\end{rmk}

\subsubsection{$q$-expansion principles}

Like in Serre's construction, Deligne and Ribet's approach also relies substantially on properties of $q$-expansions of Eisenstein series.
So we now will need some $q$-expansion principles, i.e. theorems that explain to what degree Hilbert modular forms are determined by their $q$-expansions.  In Proposition \ref{alg-qexp} and Theorem \ref{cor-diff}, we choose the level structure so that the reduction of $\mathcal{M}$ is connected.  (Alternatively, we could modify the statements of Proposition \ref{alg-qexp} and Theorem \ref{thm-qpcoeff} to take a $q$-expansion at a cusp on each connected component.)

\begin{prop}[algebraic $q$-expansion principle for Hilbert modular forms]\label{alg-qexp}
Let $f$ be a Hilbert modular form defined over a ring $R$.
\begin{enumerate}
\item{If the algebraic $q$-expansion of $f$ vanishes at some cusp, then $f = 0$.}\label{qexp1}
\item{Let $R_0\subseteq R$ be a ring.  If the $q$-expansion of $f$ at some cusp has coefficients in $R_0$, then $f$ is defined over $R_0$.}\label{qexp2}
\end{enumerate}
\end{prop}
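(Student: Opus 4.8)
The plan is to reduce both statements to the corresponding facts about the connectedness of the moduli space $\mathcal{M}$ and the standard theory of Tate objects at the cusps. First I would recall that, under the chosen level structure, the geometric fibers of $\mathcal{M}$ (and of its toroidal compactification $\overline{\mathcal{M}}$) are connected and geometrically irreducible. The $q$-expansion of a Hilbert modular form $f \in H^0\left(\overline{\mathcal{M}}, \boxtimes_\sigma \uo(k_\sigma)\right)$ at the cusp corresponding to a fractional ideal $\mathfrak{A}$ is obtained by pulling $f$ back along the canonical map $\Spec R(\!(q^\nu)\!) \to \overline{\mathcal{M}}$ classifying the Tate semi-abelian variety attached to $\mathfrak{A}$, equipped with its canonical differentials. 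This map factors through a formal neighborhood of the cusp, and the key input is that it is formally \'etale (or at least dominant) onto that neighborhood, so that a section vanishing on the formal neighborhood vanishes on a neighborhood of the cusp.

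For part \eqref{qexp1}, suppose the $q$-expansion of $f$ at one cusp vanishes. Then $f$ restricted to the formal completion of $\overline{\mathcal{M}}$ along that cuspidal divisor vanishes; since $\boxtimes_\sigma \uo(k_\sigma)$ is a line bundle and $\overline{\mathcal{M}}$ is reduced and irreducible, the vanishing locus of $f$ is a closed subscheme containing a nonempty open (formal) piece, hence is all of $\overline{\mathcal{M}}$, so $f = 0$. One should be slightly careful that this argument over a general ring $R$ requires flatness or at least that the relevant scheme is integral over $R$; the standard move is to check it fiberwise after reducing to the case where $R$ is a field (or a domain), using that the formation of $q$-expansions commutes with base change. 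For part \eqref{qexp2}, I would consider $f$ as a section over $R$ and let $R_0 \subseteq R$ be the subring given. The $q$-expansion map sends $H^0\left(\overline{\mathcal{M}}_{R_0}, \boxtimes_\sigma \uo(k_\sigma)\right)$ into $R_0(\!(q^\nu)\!)$ and, by part \eqref{qexp1} applied over the fraction field (or over $R_0$ itself), this map is injective. The content is then exactness of the sequence
\[
0 \to H^0\!\left(\overline{\mathcal{M}}_{R_0}, \boxtimes_\sigma \uo(k_\sigma)\right) \to H^0\!\left(\overline{\mathcal{M}}_{R}, \boxtimes_\sigma \uo(k_\sigma)\right) \to R(\!(q^\nu)\!)/R_0(\!(q^\nu)\!),
\]
i.e. that a modular form over $R$ whose $q$-expansion lands in $R_0(\!(q^\nu)\!)$ already descends to $R_0$. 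This follows from flat base change for $H^0$ together with the injectivity just established, or more hands-on by writing $R = R_0 \oplus (R/R_0)$ as $R_0$-modules and applying part \eqref{qexp1} to the projection of $f$ onto the complementary summand.

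The main obstacle I expect is the careful handling of base change and the precise formulation over a non-reduced or non-flat base ring $R$: the clean statement really wants $\overline{\mathcal{M}}$ (or rather its relevant formal completion) to be flat, so that $q$-expansion commutes with base change and the irreducibility argument in part \eqref{qexp1} can be run fiberwise. In the sources (e.g. \cite[Section 1.1]{kaCM}, \cite[Example 5.3]{DR}), this is handled by first proving the result over $\ZZ_p$ or over a field and then extending by base change, and I would follow that route rather than trying to argue directly over an arbitrary $R$. The geometric input — connectedness of the reduction of $\mathcal{M}$ under the chosen level structure — is exactly what the parenthetical remark before the proposition is flagging, and it is what makes ``vanishes at \emph{some} cusp'' (as opposed to ``at every cusp on every component'') sufficient.
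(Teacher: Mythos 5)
Your proposal follows essentially the same route the paper indicates: part \eqref{qexp1} via the irreducibility of the (compactified) moduli space together with the Tate-object description of $q$-expansions, and part \eqref{qexp2} deduced from \eqref{qexp1} by the standard base-change argument of Katz (\cite[Corollary 1.6.2]{ka2}), which is exactly what the paper cites. The only minor caveat is that $R = R_0 \oplus (R/R_0)$ need not split as $R_0$-modules in general, but your alternative formulation via the exact sequence and injectivity of the $q$-expansion map over $R/R_0$ is the correct version of that step.
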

The proof of Statement \eqref{qexp1} relies on the irreducibility results in \cite{ribetirred, ra}, and Statement \eqref{qexp2} can be proved as a consequence of \eqref{qexp1} (similarly to the proof of \cite[Corollary 1.6.2]{ka2}).

The Fourier coefficients of the Eisenstein series needed for studying $L$-values of totally real Hecke characters have coefficients in the ring of integers $\CO$ of a number field.  So as a consequence of Proposition \ref{alg-qexp}\eqref{qexp2} and the fact that the algebraic and analytic $q$-expansions of a Hilbert modular form agree (by \cite[Equation (1.7.6)]{kaCM}), we have that our Eisenstein series are actually defined over $\CO$.

In order to construct the $p$-adic $L$-functions, we will also need a $p$-adic $q$-expansion principle for Hilbert modular forms.
\begin{thm}[$p$-adic $q$-expansion principle for Hilbert modular forms, (5.13) of \cite{DR}]\label{thm-qpcoeff}If $f\in V$ and the $q$-expansion of $f$ vanishes at some cusp, then $f = 0$.
Furthermore, if $R_0$ is flat over $\ZZ_p$, then the $R_0$-submodule $V_{R_0}$ of $p$-adic modular forms defined over $R_0$ consists of the elements $f\in V_{R_0}\otimes_{\ZZ_p} \IQ_p$ whose $q$-expansion coefficients lie in $R_0$, and if the $q$-expansion of a $p$-adic modular form $f$ at some cusp has coefficients in $R_0$, then the same is true at all the cusps.
\end{thm}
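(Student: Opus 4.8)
The plan is to reduce the first (vanishing) assertion to a $q$-expansion principle on each finite layer $\Ig_{n,m}$ of the Igusa tower, where it follows from the irreducibility of that tower, and then to bootstrap to the ``furthermore'' statement by a dévissage using the flatness of $R_0$ over $\ZZ_p$. First I would unwind the limits in $V=V_{\infty,\infty}=\varprojlim_m\varinjlim_n V_{n,m}$: a $p$-adic Hilbert modular form $f$ vanishes if and only if its image $f_m$ in $V_{\infty,m}=\varinjlim_n H^0(\Ig_{n,m},\CO_{\Ig_{n,m}})$ vanishes for every $m$, each $f_m$ being represented by a section of $\CO_{\Ig_{n,m}}$ for a suitable $n$; and by construction the $q$-expansion of $f$ at a cusp is assembled from those of the $f_m$ at the corresponding cusps of the $\Ig_{n,m}$. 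So it suffices to show that a section $h\in H^0(\Ig_{n,m},\CO_{\Ig_{n,m}})$ with vanishing $q$-expansion at a cusp is zero.

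The crux is the case $m=1$, where $\Ig_{n,1}$ lives over $W_1=\overline{\F}_p$. Having chosen the auxiliary level so that the reduction of $\mathcal{M}$ is connected, the decisive input is that \emph{the Igusa tower is irreducible}, so that $\Ig_{n,1}$ and its toroidal compactification are integral (equivalently, one needs surjectivity of the $(\OK\otimes\ZZ_p)^\times$-monodromy over the ordinary locus $\mathcal{M}^\ord$); this is the same irreducibility package behind \cite{ribetirred, ra} that underlies Proposition \ref{alg-qexp}, and it is the step I expect to be the main obstacle, in that it is the one genuinely hard geometric fact one must import. Granting it, the ring of global functions injects into the completed local ring at the cusp, the $q$-expansion map is by construction this completion map, and hence $h=0$. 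For $m\geq 2$ I would induct on $m$, using that $\Ig_{n,m}$ is flat over $W_m$ with reduction $\Ig_{n,1}$ and the exact sequence $0\to p^{m-1}\CO/p^m\CO\to\CO/p^m\CO\to\CO/p^{m-1}\CO\to 0$ together with the identification $p^{m-1}\CO/p^m\CO\cong\CO_{\Ig_{n,1}}$ (multiplication by $p^{m-1}$): by induction a section with vanishing $q$-expansion is already killed modulo $p^{m-1}$, hence is $p^{m-1}$ times a section of $\CO_{\Ig_{n,1}}$ of vanishing $q$-expansion, hence zero by the $m=1$ case. Reassembling over all $m$ proves the first assertion, and the same argument works verbatim over any base, in particular over $R_0$ and over $R_0/p^NR_0$ for $\ZZ_p$-flat $R_0$.

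For the ``furthermore'' part, $V_{R_0}$ clearly lies inside the set of $f\in V_{R_0}\otimes_{\ZZ_p}\IQ_p$ whose $q$-expansion coefficients lie in $R_0$, by the definition of the $q$-expansion via the ($R_0$-flat) universal Tate object. Conversely, given such an $f$, choose $N$ with $g:=p^Nf\in V_{R_0}$; then the $q$-expansion of $g$ has coefficients in $p^NR_0$, so the reduction of $g$ modulo $p^N$ lies in $V_{R_0/p^NR_0}$ with vanishing $q$-expansion, hence is zero by the first assertion applied over $R_0/p^NR_0$, i.e.\ $g\in p^NV_{R_0}$. Since $R_0$ --- hence $V_{R_0}$ --- is $p$-torsion free (this is exactly where flatness of $R_0$ over $\ZZ_p$ is used, also to split the sequence above), dividing by $p^N$ yields $f\in V_{R_0}$. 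In particular, if the $q$-expansion of $f$ at one cusp has coefficients in $R_0$, this already forces $f\in V_{R_0}$, and hence its $q$-expansion at every cusp has coefficients in $R_0$.
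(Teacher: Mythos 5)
The paper does not prove this statement; it is imported verbatim as (5.13) of Deligne--Ribet, so there is no internal proof to compare against. Your argument is the standard one behind that result --- reduction to the layers $\Ig_{n,m}$, the irreducibility of the characteristic-$p$ Igusa tower (equivalently surjectivity of the $(\OK\otimes\ZZ_p)^\times$-monodromy, the same input as \cite{ribetirred, ra} behind Proposition \ref{alg-qexp}) as the one hard geometric fact, dévissage in $m$, and the $p^N$-clearing trick for the integrality statement --- and it is essentially correct, with the main obstacle correctly identified. Two points deserve slightly more care than ``works verbatim over any base'': first, the $q$-expansion is a restriction to the formal completion of the \emph{compactified} Igusa scheme along a boundary divisor, so the injection of global functions into the completed local ring needs the compactification and integrality there, not just of the open Igusa scheme; second, the passage from the vanishing statement over $\overline{\bF}_p$ (resp.\ $W_m$) to the one over $R_0/p^NR_0$ is where the flatness of $R_0$ over $\ZZ_p$ really enters --- $R_0/p^NR_0$ is then flat over $\ZZ/p^N\ZZ$, so tensoring the injective $q$-expansion map with it preserves injectivity (together with the identification of $V_{R_0/p^NR_0}$ with the base change of $V_{\ZZ/p^N\ZZ}$, valid since the Igusa schemes are affine) --- rather than only through $p$-torsion-freeness of $V_{R_0}$ as you suggest.
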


As an important consequence of Theorem \ref{thm-qpcoeff}, we obtain Corollary \ref{cor-diff}.

\begin{cor}[Corollary (5.14) of \cite{DR}]\label{cor-diff}
Let $f\in V_{R_0}\otimes\IQ_p$, and suppose that at some cusp, all the $q$-expansion coefficients, aside possibly from the constant term, of $f$ lie in $R_0$.  Then the difference between the constant terms of the $q$-expansions of $f$ at any two cusps also lies in $R_0$.
\end{cor}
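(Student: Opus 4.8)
The plan is to deduce the statement directly from the ``furthermore'' part of Theorem~\ref{thm-qpcoeff}, after subtracting off an auxiliary $p$-adic modular form so as to kill the constant term at the good cusp. Let $c_1$ be a cusp at which all the non-constant $q$-expansion coefficients of $f$ lie in $R_0$, let $c_2$ be an arbitrary second cusp, and let $a_0$ be the constant term of the $q$-expansion of $f$ at $c_1$. Since $f\in V_{R_0}\otimes_{\ZZ_p}\IQ_p$, there is an $n\geq 0$ with $p^n f\in V_{R_0}$, so all the $q$-expansion coefficients of $f$---in particular $a_0$---lie in $p^{-n}R_0\subseteq R_0[1/p]$. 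Let $\mathbf{1}\in V_{\ZZ_p}$ denote the constant element $1$ of the ring $V=V_{\infty,\infty}$; its $q$-expansion at every cusp is the constant power series $1$ (constant term $1$, all other coefficients $0$).

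Next I would form $g:=f-a_0\,\mathbf{1}$, which again lies in $V_{R_0}\otimes_{\ZZ_p}\IQ_p$ (writing $a_0=r/p^n$ with $r\in R_0$, one has $a_0\,\mathbf{1}=p^{-n}(r\,\mathbf{1})\in p^{-n}V_{R_0}$). By the choice of $a_0$, the $q$-expansion of $g$ at $c_1$ has constant term $0$ and the same non-constant coefficients as $f$, so all of its $q$-expansion coefficients at $c_1$ lie in $R_0$. Since $R_0$ is flat over $\ZZ_p$, the ``furthermore'' part of Theorem~\ref{thm-qpcoeff} then gives $g\in V_{R_0}$ and, crucially, that the $q$-expansion coefficients of $g$ lie in $R_0$ at \emph{every} cusp. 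Reading this off at $c_2$, the constant term of $g$ at $c_2$---namely $a_0(f;c_2)-a_0$---lies in $R_0$; equivalently, the difference between the constant terms of $f$ at $c_1$ and at $c_2$ lies in $R_0$. Since $c_2$ was arbitrary, this is exactly Corollary~\ref{cor-diff}.

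The argument is short because essentially all of the substantive content is packaged in Theorem~\ref{thm-qpcoeff}; the only items needing (routine) checking are that $a_0$ genuinely lands in $R_0[1/p]$ and that the constant element $\mathbf{1}$ has $q$-expansion $1$ at every cusp, the latter being immediate from the realization of $V_{\infty,\infty}$ as the ring of global sections of the structure sheaf of the $p^\infty$-level formal scheme (together with K\"ocher's principle governing the behaviour over the cusps). I therefore do not expect a real obstacle; if one wished to avoid invoking $\mathbf{1}$ at all, one could equally well normalize a weight-$0$ Eisenstein series to have constant term $1$ at $c_1$ and run the identical argument, but taking the constant $p$-adic modular form is the cleanest route.
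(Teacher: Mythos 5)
Your proof is correct and is essentially identical to the paper's own argument: both subtract the constant term at the good cusp (viewed as a weight-$0$, i.e.\ constant, $p$-adic modular form) and then apply the ``furthermore'' clause of Theorem~\ref{thm-qpcoeff} to conclude that all coefficients of the difference lie in $R_0$ at every cusp. The only difference is that you spell out the routine checks (that $a_0\in R_0[1/p]$ and that $\mathbf{1}$ has $q$-expansion $1$ everywhere) which the paper leaves implicit.
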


\begin{proof}
Let $v\in V_{R_0}\otimes\IQ_p$ be such that at some cusp, all the $q$-expansion coefficients, aside possibly from the constant term, of $f$ lie in $R_0$.  Let $a$ be the constant term of $f$ at that cusp.  Then $a$ is a weight $0$ modular form, and all the coefficients of $f-a\in V_{R_0}\otimes \IQ_p$ lie in $R_0$.  So by Theorem \ref{thm-qpcoeff}, $f-a\in V_{R_0}$ and all the $q$-expansion coefficients of $f-a$, in particular its constant term, at any other cusp lie in $R_0$.  So the difference between any two constant terms of $f$ lies in $R_0$.
\end{proof}

As an immediate corollary of Corollary \ref{cor-diff}, we obtain:
\begin{cor}
If the abstract Kummer congruences hold for all the non-constant terms of the $q$-expansions of a family of $p$-adic modular forms $f$ at some cusp, then they also hold for the difference between the constant terms at two cusps.
\end{cor}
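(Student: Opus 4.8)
The plan is to apply Corollary~\ref{cor-diff} together with the definition of the abstract Kummer congruences in Theorem~\ref{KC-thm}, checking that the hypotheses transfer correctly. First I would fix notation: let $\{f_\alpha\}_{\alpha\in I}$ be the family of $p$-adic modular forms, and for a cusp $\mathfrak{A}$ and each $\alpha$ write $a_0^{\mathfrak{A}}(f_\alpha)$ for the constant term of the $q$-expansion of $f_\alpha$ at $\mathfrak{A}$ and $a_n^{\mathfrak{A}}(f_\alpha)$, $n\geq 1$, for the higher coefficients. Fix two cusps $\mathfrak{A}$ and $\mathfrak{B}$, and set $c_\alpha := a_0^{\mathfrak{A}}(f_\alpha) - a_0^{\mathfrak{B}}(f_\alpha)$. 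The claim to be proved is that the $c_\alpha$ satisfy the abstract Kummer congruences, given that for some cusp the non-constant coefficients $\{a_n(f_\alpha)\}_{n\geq 1,\ \alpha\in I}$ do.

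Next I would unwind what must be verified. By Theorem~\ref{KC-thm}, we must show: whenever $\{b_\alpha\}_{\alpha\in I}\subset\CO[\tfrac1p]$ is almost all zero and $n_0$ is a nonnegative integer such that $\sum_\alpha b_\alpha c_\alpha$ needs to be shown in $p^{n_0}\CO$, the appropriate hypothesis on a test function holds. Concretely, the hypothesis that the non-constant terms satisfy the abstract Kummer congruences means the $q$-expansion principle is the right tool: form the finite $\CO[\tfrac1p]$-combination $f := \sum_\alpha b_\alpha f_\alpha$, which lies in $V_{R_0}\otimes\IQ_p$ with $R_0 = \CO$ (after enlarging $R_0$ so that it is flat over $\ZZ_p$, as in the statement of Theorem~\ref{thm-qpcoeff}). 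The given congruence hypothesis, applied coefficientwise, says precisely that for any such combination with $\sum_\alpha b_\alpha a_n(f_\alpha)\in p^{n_0}\CO$ for all $n\geq 1$ (at the chosen cusp), we may divide through: the $p$-adic modular form $p^{-n_0} f$ has all its non-constant $q$-expansion coefficients at that cusp in $\CO$. Now Corollary~\ref{cor-diff} applies to $p^{-n_0}f$: the difference of its constant terms at the two cusps $\mathfrak{A}$ and $\mathfrak{B}$ lies in $R_0=\CO$. But that difference is exactly $p^{-n_0}\sum_\alpha b_\alpha c_\alpha$, so $\sum_\alpha b_\alpha c_\alpha\in p^{n_0}\CO$, which is the abstract Kummer congruence for $\{c_\alpha\}$.

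I would then spell out the one mild subtlety: the abstract Kummer congruences as stated in Theorem~\ref{KC-thm} are indexed by a single function per parameter $\alpha$, whereas here each $f_\alpha$ carries infinitely many $q$-expansion coefficients $a_n(f_\alpha)$. The clean way to phrase the hypothesis "the abstract Kummer congruences hold for all the non-constant terms" is: for every almost-everywhere-zero family $\{b_\alpha\}\subset\CO[\tfrac1p]$ and every nonnegative integer $n_0$, if $\sum_\alpha b_\alpha a_n(f_\alpha)\in p^{n_0}\CO$ for all $n\geq 1$ simultaneously, then this forces the combination $\sum_\alpha b_\alpha f_\alpha$, divided by $p^{n_0}$, to be a $p$-adic modular form defined over $\CO$ up to its constant term — which is exactly the input Corollary~\ref{cor-diff} consumes. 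So the argument is really just: "rescale by $p^{-n_0}$, invoke Corollary~\ref{cor-diff}, read off the constant-term difference."

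The main obstacle is not a deep one but a bookkeeping one: making sure the flatness hypothesis on $R_0$ in Theorem~\ref{thm-qpcoeff} and Corollary~\ref{cor-diff} is met (one takes $R_0$ to be the ring generated over $\ZZ_p$ by the relevant coefficients, or simply assumes $\CO$ flat over $\ZZ_p$ as elsewhere in the paper), and that the finite combination $f=\sum_\alpha b_\alpha f_\alpha$ genuinely lands in $V_{R_0}\otimes_{\ZZ_p}\IQ_p$ so that the $p$-adic $q$-expansion principle is applicable. Once those are in place, the corollary is immediate from Corollary~\ref{cor-diff}.
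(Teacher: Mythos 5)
Your proposal is correct and is exactly the argument the paper intends: the paper states this result with no proof beyond the phrase ``as an immediate corollary of Corollary~\ref{cor-diff},'' and your rescaling by $p^{-n_0}$, applying Corollary~\ref{cor-diff} to the finite combination $\sum_\alpha b_\alpha f_\alpha$, and reading off the constant-term difference is the intended deduction. The bookkeeping points you raise (flatness of $R_0$, membership in $V_{R_0}\otimes_{\ZZ_p}\IQ_p$) are handled correctly and do not change the route.
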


To prove Theorem \ref{DR-thm}, it then suffices to realize $L^{(p)}(1-k, \rho)$ in the constant term of a $q$-expansion of an Eisenstein series $E_{k, \rho}$ and observe that the constant term of $E_{k, \rho}$ at a cusp corresponding to a fractional ideal $\mathfrak{A}$ is $\rho(\mathfrak{A})\mathbf{N}(\mathfrak{A})^kL^{(p)}(1-k, \rho)$, which is proved in \cite[Theorem (6.1))]{DR}.

\subsection{The case where $\chi$ is a Hecke character of a CM field}\label{sec:cm}

Given that we just considered the case of Hecke characters of totally real fields, it is natural now to move to CM fields $\cmfield$.  Fix a CM type $\Sigma$ for $\cmfield$, i.e. a set of $[K:\IQ]/2$ embeddings $K\hookrightarrow \IC$ such that exactly one representative from each pair of complex conjugate embeddings $\left\{\sigma, \bar{\sigma}\right\}$ lies in $\Sigma$.  In \cite{kaCM}, Katz considered the case where $\chi: K^\times \backslash \adeles_K^\times\rightarrow \IC$ is a Hecke character of type $A_0$, i.e. $\chi$ is of the form
\begin{align*}
\chi = \chi_\fin\prod_{\sigma\in\Sigma}\left(\frac{1}{\sigma}\right)^k\left(\frac{\bar{\sigma}}{\sigma}\right)^{d(\sigma)},
\end{align*}
with $k$ a positive integer, $d(\sigma)$ a nonnegative integer for all $\sigma\in\Sigma$, and $\chi_\fin$ a finite order character.
Building on ideas of Eisenstein, the study of the algebraicity properties of the values $L(0, \chi)$ was initiated Damerell and later extended and completed by Goldstein--Schappacher \cite{GS1, GS2}, Shimura \cite{shimura-CM}, and Weil \cite{weil1}.  (A summary of the historical development is in \cite[\S5]{harder-schappacher}.)  

For $\chi$ of type $A_0$ as above, the values $L(0, \chi)$ can be expressed (in what is known as {\em Damerell's formula}) as finite sums of values of Eisenstein series in the space of Hilbert modular forms.  Thus, it is natural to try to construct Eisenstein measures suited to this application and adapt the techniques introduced thus far.

Indeed, this is what Katz did in \cite{kaCM}, but there are several new challenges Katz had to solve in this setting, which also helped uncover paths toward generalizations.  Because these challenges also arise more broadly, we continue the discussion in Section \ref{sec:general}.

\section{Generalizations and challenges}\label{sec:general}
Various constructions of automorphic $L$-functions are closely tied to Eisenstein series.  This includes Damerell's formula, the Rankin--Selberg method, and pullback methods like the doubling method. Each of these methods was used to prove algebraicity of certain values of the corresponding automorphic $L$-functions.  Given the developments discussed thus far, it is therefore natural to try to construct Eisenstein measures valued in appropriate spaces of $p$-adic automorphic forms and use those to construct $p$-adic $L$-functions.  Those familiar with any of these methods  might recall, though, that the Eisenstein series occurring in the constructions of the $L$-functions can be quite intricate (and likewise for computations of the Fourier coefficients), and furthermore, the $L$-functions are not simply realized as constant terms of these particular Eisenstein series.  

In addition, on the $p$-adic side, the slightest modification to input can have drastic geometric consequences.  For example, changing a prime from split to inert can lead to the entire ordinary locus employed in the definition of $p$-adic modular forms described above to disappear in certain settings.  In another direction, working with the full range of Hecke characters from Section \ref{sec:cm} requires considering Eisenstein series that are not holomorphic. 

Extending the approach of constructing Eisenstein measures to produce $p$-adic $L$-functions attached to Hecke characters of CM fields, as well as those considered in higher rank generalizations like \cite{apptoSHL, apptoSHLvv, HELS}, involves working in a setting where:
\begin{itemize}
\item{The approach of using constant terms (from \cite{serre, DR}) no longer applies, due to the fact that for Eisenstein series occurring in particular formulas for $L$-functions, the Fourier expansions of those Eisenstein series at cusps where it is convenient to work lack constant terms.  For example, the Fourier expansions of the particular Eisenstein series employed in the formulas in \cite{kaCM} turn out to lack constant terms at the cusps where they computed, as seen in, e.g., \cite[Theorem (3.2.3)]{kaCM}.  (That said, if one has a convenient way to compute and study the Fourier coefficients at a cusp where the constant term is nonzero, then this issue disappears.)}
\item{The Eisenstein series are substantially more complicated to construct.}
\item{The constructions of the $L$-functions require considering values of $\ci$ (not necessarily holomorphic) Eisenstein series.}
\item{The points in the ordinary locus needed in the construction of the $L$-functions might be empty.}
\end{itemize}
Moving beyond Hecke characters to Rankin--Selberg $L$-functions and $L$-functions associated to automorphic forms (e.g. through the doubling method), we also must content with the following:
\begin{itemize}
\item{$L$-functions might be represented not as finite sums of values of Eisenstein series, but instead as integrals of cusp form(s) against restrictions of  Eisenstein series to certain spaces (e.g. as in the doubling method)}
\end{itemize}

\subsection{Strategies of Katz, and beyond}\label{katzbeyond}

As noted in Section \ref{sec:cm}, Damerell's formula expresses values of the $L$-functions associated to Hecke characters of CM fields in terms of finite sums of values of Eisenstein series from the space of Hilbert modular forms.  In his construction of $p$-adic $L$-functions for CM fields \cite{kaCM}, Katz exploits the fact that the Eisenstein series get evaluated only at CM points, i.e. Hilbert--Blumenthal abelian varieties with complex multiplication.  He constructs an Eisenstein measure and then constructs a $p$-adic measure at each of these CM points $\mathfrak{A}$ by evaluating the Eisenstein series in the image of his Eisenstein measure at $\mathfrak{A}$.

Constructing the Eisenstein series and measure is considerably more involved than in the examples mentioned so far, though, and it is the subject of \cite[Chapter III and Section 4.2]{kaCM}.  Part of Katz's strategy is to introduce a {\em partial Fourier transform} (\cite[Section 3.1]{kaCM}), which allows him to construct an Eisenstein series amenable to computations for $L$-functions but which also has $q$-expansion coefficients that satisfy congruences (so that he can employ the $q$-expansion principles from above).  The key point with the partial Fourier transform is to take the Fourier transform of appropriate data that interpolates well to produce the Eisenstein series and then exploit the close relationship between the Fourier transform and the Fourier transform of the Fourier transform, namely that the Fourier transform of the Fourier transform of $t\mapsto f(t)$ is $t\mapsto f(-t)$. Hence we get an Eisenstein measure whose coefficients interpolate well.

To handle the $\ci$ Eisenstein series that occur in the construction of $L$-functions for CM fields, Katz must consider certain differential operators.  The $\ci$ Eisenstein series in the construction can be obtained by applying the Maass--Shimura differential operators to holomorphic Eisenstein series.  Katz exploits the Hodge theory of Hilbert--Blumenthal abelian varieties to construct $p$-adic analogues (built out of the Gauss--Manin connection and Kodaira--Spencer morphism) of those differential operators \cite[Chapter II]{kaCM}.  On $q$-expansions, these operators are a generalization of the operator $q\frac{d}{dq}$, and they preserve interpolation properties of the Hilbert modular forms to which they are applied. 

These techniques for constructing Eisenstein measures have since been extended to the PEL setting.  For example, differential operators on $p$-adic automorphic forms on unitary groups are the subject of \cite{EDiffOps, EFMV} (which also builds on \cite{hasv}), and they were used as a starting point in the construction of Eisenstein measures taking values in the space of $p$-adic automorphic forms on unitary groups in \cite{apptoSHL, apptoSHLvv}, which were in turn employed in the constructions of $p$-adic $L$-functions in \cite{HELS, EW}.

Like in Section \ref{sec:pHMF}, Katz's construction is over the ordinary locus.  This introduces a serious obstacle, namely that there are no ordinary CM points, if $p$ is inert.  Given that Damerell's formula is a sum over CM points, this means Katz's approach did not address $p$ inert.

Over four decades passed before an approach to $p$ inert was introduced.  In \cite{andreatta-iovita}, Andreatta and Iovita explain how to adapt Katz's approach to the case of quadratic imaginary fields with $p$ inert.  In separate work \cite{kriz2018new}, Kriz also introduced an approach for inert $p$.  Parts of \cite{andreatta-iovita} are also being extended to the case of CM fields in \cite{aycock, graziani2020modular}.  The idea of Andreatta and Iovita is to work instead with {\em overconvergent} $p$-adic modular forms and modify the approach to handling the differential operators.  Whereas Katz exploits Dwork's {\em unit root splitting} that exists over the ordinary locus, Andreatta and Iovita build an operator from the Gauss--Manin connection and then take pairings that do not require projecting modulo a unit root splitting.

\subsection{Working with pairings and pullback methods}\label{sec:pairing}

Katz's approach to constructing Eisenstein measures provides a starting point for other cases, in particular automorphic forms in the PEL setting.  Since we are often faced with representations of $L$-functions not as a finite sum but rather as an integral of an Eisenstein series against cusp form(s), we now briefly explain the key ideas for adapting such a representation to the $p$-adic setting.  We discuss this strategy in the context of the Rankin--Selberg zeta function, where it was first developed (by Hida in \cite{hi85}), but it has also since been extended to various settings, including, among others, in \cite{hidaLfcn, pa, liuJussieu, liu-rosso, HELS}.

The Rankin--Selberg product of a weight $k$ holomorphic cusp form $f = \sum_{n\geq 1}a_n q^n$ and a weight $\ell\leq k$ holomorphic modular form $g = \sum_{n\geq 0}b_n q^n$ is a zeta series
\begin{align*}
D(s, f, g) = \sum_{n=1}^\infty \frac{a_n b_n}{n^s}.
\end{align*}
Shimura and Rankin proved in \cite{shimura-RS, rankin} that 
\begin{align*}
D(k-1-r, f, g) = c\pi^l\langle\tilde{f}, g\delta_\lambda^{(r)} E\rangle,
\end{align*}
 where $E$ denotes a particular weight $\lambda:=k-\ell-2r$ Eisenstein series, $\tilde{f}(z):=\overline{f\left(-\bar{z}\right)},$ $\delta_\lambda^{(r)}$ is a Maass--Shimura operator that raises the weight of a modular form of weight $\lambda$ by $2r$ (so $\delta_\lambda^{(r)}:=\partial_{\lambda+2r-2}\circ \partial_{\lambda+2}\circ\partial_\lambda$ with $\delta_\lambda:=\frac{1}{2\pi i}\left(\frac{\lambda}{2iy}+\frac{\partial}{\partial z}\right)$), $c=\frac{\Gamma(k-\ell-2r)}{\Gamma(k-1-r)\Gamma(k-\ell-r)}\frac{(-1)^r 4^{k-1}N}{3}\prod_{p\divides N}\left(1+p^{-1}\right)$ (with $N$ the level of the modular forms), and $\langle, \rangle$ denotes the Petersson inner product.  As a consequence, Shimura proved in \cite[Theorem 2]{shimura-RS} that
 \begin{align*}
 \frac{\pi^{-k}D(m, f, g)}{\langle f, f\rangle}
 \end{align*}
 is algebraic for all integers $k, \ell, m$ satisfying $\ell<k$ and $\frac{k+\ell-2}{2}<m<k$.
 
In \cite{hi85}, Hida constructed $p$-adic Rankin--Selberg zeta functions by building on Shimura's approach to studying algebraicity.  In particular, the idea to interpret the Rankin--Selberg zeta function in terms of the Peterssen pairing plays a key role, and this remains true in extensions to higher rank groups (including in the discussions of algebraicity in \cite{hasv} and in extensions to the $p$-adic case in PEL settings involving the doubling method in \cite{liuJussieu, liu-rosso, HELS}).  The idea is to reinterpret the linear Petersson pairing $\langle h_1, h_2\rangle$ as a functional $\ell_{h_1}\left(h_2\right)$.  This suggests identifying a space of modular forms with its dual space, which in turn leads to use of the associated Hecke algebra.  This is the point that allows Hida to integrate Eisenstein measures (generally coming from familiar families of Eisenstein series, at least in the case of modular forms) into the construction of $p$-adic $L$-functions.  While this approach makes sense in higher rank (e.g. in the context of the doubling method), putting it into practice is nontrivial for various reasons, including geometric issues (like those mentioned above) and new properties of the Hecke algebra that must be taken into account.

\subsection{Some remaining challenges and future directions}
Putting aside the bigger goal of proving the Greenberg--Iwasawa main conjectures, challenges still remain for producing $p$-adic $L$-functions.  Even in settings where we have constructions of $L$-functions closely tied to the behavior of Eisenstein series and we anticipate the existence of Eisenstein measures, actually carrying out the construction can be nontrivial.  We conclude by highlighting three categories of challenges and suggest some future directions toward resolving them:
\begin{enumerate}
\item{As noted in Section \ref{sec:pairing}, the pairings that arise from integral representations of $L$-functions can be useful for $p$-adic interpolation, but one often has to deal with significant technical challenges.  Properties of Hecke algebras (and the {\em ordinary} Hecke algbras where one often works in practice) can present obstacles.  For example, a Gorenstein property is often useful, but not necessarily known, in this context.  Pilloni's higher Hida theory seems to present a promising and natural alternative framework for interpreting these pairings \cite{pilloni-higher, loeffler2019higher, rboxer2020higher}.}
\item{As noted in Section \ref{katzbeyond}, cases where the prime $p$ does not split can lead to considerable geometric challenges, which have been recently addressed in low rank in \cite{andreatta-iovita}.  For unitary groups, work on differential operators in \cite{EiMa, DSG, DSG2} and Hecke operators in \cite{brasca-rosso} addresses some challenges that arise when the ordinary locus is empty, but work remains in the inert case (even just for constructing appropriate Eisenstein series for the Eisenstein measure) to construct the full $p$-adic $L$-functions.  In one of the most promising directions, the work in \cite{aycock, graziani2020modular} suggests the possibility of extending the techniques of \cite{andreatta-iovita} to the PEL setting, but again, details of the Eisenstein measures would still need to be worked out by adjusting the choices of local data that feed into the partial Fourier transforms in \cite{apptoSHL}.}
\item{At a more fundamental level, before one can construct $p$-adic $L$-functions via the method of Eisenstein measures, one needs a representation of the $L$-function in terms of Eisenstein series.  Such a representation, though, is insufficient unless we also can reinterpret it algebraically.  For example, given the success in adapting the doubling method to the $p$-adic setting in \cite{liuJussieu, EW, HELS}, it is natural to try to adapt the {\em twisted doubling} representation of $L$-functions (i.e. for producing $L$-functions associated to a twist of a cuspidal automorphic representation by a representation of $\GL_n$ for some $n$) in \cite{CFGK} to the $p$-adic setting.  As of yet, though, we do not have an appropriate interpretation in terms of algebraic geometry or another familiar algebraic tool, and without an algebraic interpretation, are unlikely to see a path toward a $p$-adic realization.  There is currently active work to produce integral representations of various $L$-functions.  It will be interesting to see which ones become suitable for proving algebraicity results, either in terms of the techniques described above or in terms of those yet to be discovered.  
}
\end{enumerate}

\bibliography{../eisch} 

\end{document}